\documentclass[a4paper,11pt]{amsart}
\usepackage{longtable}
\usepackage{amsfonts}
\usepackage{amsmath}
\usepackage{amssymb}
\usepackage{latexsym}
\usepackage{amsthm}
\usepackage[all]{xy}

\newtheorem{thm}{Theorem}[section]
\newtheorem{cor}[thm]{Corollary}

\newtheorem{lem}[thm]{Lemma}

\def\af#1{\mathbb A^{#1}}
\def\aff{\op{Aff}_2(k)}
\def\al{\alpha}
\def\as#1{\renewcommand\arraystretch{#1}}

\def\be{\bigskip}
\def\cc{{\mathcal C}}
\def\comb#1#2{\as{1}\left(\!\begin{array}{c}
#1\\#2\end{array}\!\right)}
\def\combr#1#2{\as{1}\left(\!\!\left(\!\begin{array}{c}
#1\\#2
\end{array}\!\right)\!\!\right)}
\def\e{\medskip}
\def\eps{\epsilon}
\def\ff#1{\mathbb F_{#1}}
\def\fg{\op{Fix}_{\ga}}
\def\fq{\mathbb{F}_q}
\def\G{\mathbb{G}_m}
\def\g{\Gamma}
\def\ga{\gamma}
\def\gg{\Gamma_\gamma}
\def\gl{\op{GL}_2(k)}
\def\h{\mathcal{H}_g\rt}
\def\hh{\mathcal{H}_g}
\def\hp{\mathcal{H}_g^{\bullet}}

\def\kb{\overline{k}}
\def\la{\lambda}
\def\lra{\longrightarrow}
\def\md#1{\ \mbox{\rm(mod }{#1})}
\def\mdt#1{\,\mbox{\tiny\rm(mod $#1$)}}

\def\op{\operatorname}
\def\pg{\operatorname{PGL}_2(k)}
\def\pr#1{\mathbb P^{#1}}

\def\rt{^{\mbox{\scriptsize$\op{rat}$}}}
\def\sg{\sigma}
\def\tq{\,\,|\,\,}
\def\Tq{\,\,\Big{|}\,\,}

\def\xx{\mathcal{X}}
\def\yy{\mathcal{Y}}
\def\zg{\op{Fix}_{\ga}\zz}
\def\zz{\mathcal{Z}}

\title{Counting hyperelliptic curves that admit a Koblitz model}

\author{Cevahir Demirkiran and Enric Nart}

\thanks{Supported by the project MTM2006-11391 from the Spanish MEC}

\date{}
\begin{document}
\maketitle

\begin{center}
Universitat Aut\`onoma de Barcelona, Departament de Matem\`atiques \\ Edifici C, 08193 Bellaterra, Barcelona, Spain
\end{center}\e

\noindent{\bf Corresponding author: }Enric Nart, \ E-mail: {\tt nart@mat.uab.cat}, \\ Telephone Nbr. +34935811453, \ Fax Nbr. +34935812790\e

\noindent{\bf Key words: }finite field, hyperelliptic curve, hyperelliptic cryptosystem, Koblitz model, isomorphism class, Weierstrass point, rational $n$-set.
\bigskip

\begin{abstract}
Let $k=\fq$ be a finite field of odd characteristic. We find a closed formula for the number of $k$-isomorphism classes of pointed, and non-pointed, hyperelliptic curves  of genus $g$  over $k$, admitting a Koblitz model. These numbers are expressed as a polynomial in $q$ with integer coefficients (for pointed curves) and rational coefficients (for non-pointed curves). The coefficients depend on $g$ and the set of divisors of $q-1$ and $q+1$. These formulas show that the number of hyperelliptic curves of genus $g$ suitable (in principle) of cryptographic applications is asymptotically $(1-e^{-1})2q^{2g-1}$, and not $2q^{2g-1}$ as it was believed. The curves of genus $g=2$ and $g=3$ are more resistant to the attacks to the DLP; for these values of $g$ the number of curves is respectively $(91/72)q^3+O(q^2)$ and $(3641/2880)q^5+O(q^4)$.
\end{abstract}

\section*{Introduction} In a seminal paper Neal Koblitz introduced cryptosystems of El Gamal type based on the group of $k$-rational points of the Jacobian of a hyperelliptic curve over a finite field $k$ \cite{k}. In order to apply Cantor's algorithm for computing the group law of the Jacobian one works with non-singular Weierstrass equations of the type:
\begin{equation}\label{koblitz}
y^2+h(x)y=f(x),
\end{equation}
where $h(x),\,f(x)$ are polynomials in $k[x]$ of degree $\deg h(x)\le g$, $\deg f(x)=2g+1$, and the polynomial $f(x)$ is monic. The projective and smooth hyperelliptic curve $C$ obtained as the normalization of the projective closure of this affine curve has always a $k$-rational Weierstrass point at infinity. We say that the equation (\ref{koblitz}) is a {\it Koblitz model} of the curve $C$. Conversely, any hyperelliptic curve having a $k$-rational Weierstrass point admits a Koblitz model. These models have the advantage of covering simultaneously the cases of odd and even characteristic. In this paper we deal only with the odd characteristic case, and the change of variables 
$y=y-h(x)/2$ allows us to suppose $h(x)=0$.

The paper of Koblitz had an enormous impact in the cryptographic community and it was the origin of a stream of papers addressing to fundamental problems like the acceleration of the addition algorithm in the Jacobian, the computation of the number of $k$-rational points of the Jacobian, and attacks to the discrete logarithm problem. 

Some interest arose also on the problem of counting the $k$-isomorphism classes of hyperelliptic curves of a given genus, admitting a Koblitz model. For genus $2$ there is a nice review in \cite{ehm}, refering to previous work of several authors \cite{hmm,hm,cy,cj,dl}. For genus $3$ we can quote \cite{cj2,j,d}. However, all these papers count $k$-isomorphism classes of {\it pointed} hyperelliptic curves $(C,\infty)$; the distinguished point is always the Weierstrass point at infinity and two pointed curves $(C,\infty)$, $(C',\infty')$ are considered to be isomorphic if there is a $k$-isomorphism between $C$ and $C'$ sending $\infty$ to $\infty'$. P. Lockhart translated this isomorphism condition into a concrete equivalence relation between the Koblitz models \cite[Prop.1.2]{l} and in the quoted papers the authors count the number of classes of Koblitz models under this  equivalence relation. 

In this paper we use another method to find for all $g>1$ a closed formula for the isomorphism classes of pointed hyperelliptic curves of genus $g$ over finite fields of odd characteristic (Theorem \ref{hpointed}). For $g$ large the number of pointed curves is asymptotically  $2q^{2g-1}$ (Corollary \ref{ohp}).
 
Also, we solve the problem of counting the $k$-isomorphism classes of hyperelliptic curves of a given genus, admitting a Koblitz model. We give a closed formula for this number of isomorphism classes in Theorem \ref{hrat}. The dominant term of the formula is 
$$\left(1-\dfrac1{2!}+\dfrac1{3!}-\cdots-\dfrac1{(2g+2)!}\right)2q^{2g-1},$$ 
so that for $g$ large the number of curves is asymptotically $(1-e^{-1})2q^{2g-1}$ (Corollary \ref{ohrat}).
This number of isomorphism classes provides the real size of the bunch of curves suitable of cryptographic applications. For instance if $k$ is the field of $q$ elements with $q\equiv1\md3$, $q>7$, the following two genus-$2$ curves are $k$-isomorphic
$$
 y^2=x(x^2-1)(x-2)(x-3/2),\qquad  y^2=x(x^2-1)(x-1/2)(x-2/3),
$$
through the mapping $(x,y)\mapsto (1/x,y/(\sqrt{-3}x^3))$; thus, from the point of view of cryptographic applications they are identical. Nevertheless, they are not isomorphic as pointed curves. In fact, any $k$-isomorphism between the two curves preserving the point at infinity will act as $x\mapsto ax+b$ at the level of $x$-coordinates, with $a\in k^*$, $b\in k$; this map has to preserve the sets of $x$-coordinates of Weierstrass points of both  curves and it is easy to check that there is no transformation of this type sending $\{0,1,-1,2,3/2\}$ to $\{0,1,-1,1/2,2/3\}$. Thus, in the computation of isomorphism classes of pointed curves these curves count as two different curves. 

To obtain our results we use a general technique for enumerating $\pg$-orbits of rational $n$-sets of $\pr1$ that was developed in \cite{lmnx} and extended to arbitrary dimension in \cite{mn}. This technique was used in \cite{n} to obtain a formula for the total number of $k$-isomorphism classes of hyperelliptic curves. In section 1 we obtain some results on the enumeration of rational $n$-sets of algebraic varieties; the main result is Theorem \ref{quotient} where, for a given automorphism $\ga$ of $\pr1$,  we compute the number of rational $n$-sets of $\pr1$ that are fixed by $\ga$ and contain at least one rational point. In section 2 we recall some results concerning the classification of hyperelliptic curves up to $k$-isomorphism.
In section 3 we count pointed hyperelliptic curves by analyzing the action of the affine group on rational $(2g+1)$-sets of the affine line. In section 4 
we count hyperelliptic curves admitting a rational Weierstrass point by analyzing the action of the projective group on rational $(2g+2)$-sets of the projective line, containing at least one rational point.\be

\noindent{\bf Notations. }We fix once and for all a finite field $k=\fq$ of odd characteristic $p$
and an algebraic closure $\kb$ of $k$. We denote by $\sg\in\op{Gal}(\kb/k)$ the
Frobenius automorphism, $\sg(x)=x^q$. Also, $k_2$ will denote the unique quadratic extension of $k$ in $\kb$ and $\varphi$ denotes Euler's totient function.

\section{Rational $n$-sets of algebraic varieties}
Let $V$ be an algebraic variety defined over $k$. A {\it rational $n$-set of $V$} is by definition a $k$-rational point of the variety $\comb{V}{n}$ of $n$-sets of $V$. Thus,
a rational $n$-set $S\in\comb{V}{n}(k)$ is just an unordered family
$S=\{t_1,\dots,t_n\}$ of $n$ different points of $V(\kb)$, which is globally invariant under the Galois action: $S=S^{\sigma}$.

For any subset $Z\subseteq V(k)$ of $k$-rational points of $V$ we denote
$$
\comb{V}{n}_{\!Z}:=\left\{S\in \comb{V}{n}(k)\Tq S\cap Z=\emptyset\right\}.
$$$$
\comb{V}{n}^{\!Z}:=\left\{S\in \comb{V}{n}(k)\Tq S\cap Z\ne\emptyset\right\}.
$$
For instance, for $Z=V(k)$ we obtain in the last case the set of rational $n$-sets of $V$ containing at least one $k$-rational point. In the cases $Z=V(k)$ and $Z=\{P\}$ we use a special notation 
$$
\comb Vn\rt:=\comb Vn^{V(k)},\qquad \comb Vn^P:=\comb Vn^{\{P\}}.
$$

For any pair $r,\,n$ of non-negative integers we denote:
$$
a_V(r,n):=\left|\comb{V}n_{\!Z}\right|,\quad b_V(r,n):=\left|\comb Vn^{\!Z}\right|,
$$ where $Z$ is any subset of $V(k)$ with $|Z|=r$. Also, we introduce a particular notation
for the extreme cases:
$$a_V(n):=a_V(0,n)=\left|\comb{V}n(k)\right|, \qquad b_V(n):=b_V(|V(k)|,n)=\left|\comb Vn\rt\right|.
$$Hence, $a_V(n)$ counts the total number of rational $n$-sets of $V$ whereas $b_V(n)$ counts the number of rational $n$-sets of $V$ that contain at least one rational point.

Since $\comb{V}n^{\!Z}$ and $\comb Vn_{\!Z}$ are complementary subsets of $\comb Vn(k)$ we have, for all $r,\,n\ge0$:
\begin{equation}\label{compl}
a_V(r,n)+b_V(r,n)=a_V(n).
\end{equation}

It is easy to compute $a_V(r,n)$ in terms of the function $a_V$:

\begin{lem}
For any algebraic variety $V$ defined over $k$:
$$
a_V(r,n)=\sum_{i=0}^n(-1)^i\combr ria_V(n-i).
$$
\end{lem}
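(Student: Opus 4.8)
The plan is to reduce the statement to inverting a \emph{binomial transform}. First I would fix once and for all a subset $Z\subseteq V(k)$ with $|Z|=r$ and work with this $Z$ throughout. The key observation is that if $S\in\comb Vn(k)$ is a rational $n$-set, then $T:=S\cap Z$ consists of $k$-rational points, hence is automatically $\sg$-invariant, and therefore $S\setminus T$ is $\sg$-invariant as well; thus $S\setminus T$ is a rational $(n-|T|)$-set of $V$ disjoint from $Z$. Conversely, for any subset $T\subseteq Z$ and any rational $(n-|T|)$-set $S'$ of $V$ disjoint from $Z$, the union $T\sqcup S'$ is a rational $n$-set with $(T\sqcup S')\cap Z=T$. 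This sets up a bijection between $\comb Vn(k)$ and the set of such pairs $(T,S')$; sorting the pairs by $j:=|T|$ gives the identity
\begin{equation}\label{fwd}
a_V(n)=\sum_{j=0}^n\binom rj\,a_V(r,n-j)\qquad(n\ge0),
\end{equation}
with the convention $\binom rj=0$ for $j>r$ (and understanding $a_V(r,n-j)=\bigl|\comb V{n-j}_{\!Z}\bigr|$ for this fixed $Z$).

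Next I would package \eqref{fwd} as a generating-function identity. Setting $F(x):=\sum_{n\ge0}a_V(n)x^n$ and $G(x):=\sum_{n\ge0}a_V(r,n)x^n$ in the formal power series ring over $\mathbb Z$ (no convergence is needed, since each coefficient identity below is a finite sum), and using $\sum_{j\ge0}\binom rj x^j=(1+x)^r$, relation \eqref{fwd} is precisely $F(x)=(1+x)^r G(x)$. Since $1+x$ is a unit, $G(x)=(1+x)^{-r}F(x)$. By Newton's generalized binomial theorem $(1+x)^{-r}=\sum_{i\ge0}\binom{-r}i x^i$, and the elementary identity $\binom{-r}i=(-1)^i\binom{r+i-1}i=(-1)^i\combr ri$ (the number of $i$-element multisets drawn from $r$ symbols) turns this into $(1+x)^{-r}=\sum_{i\ge0}(-1)^i\combr ri x^i$.

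Finally, comparing the coefficient of $x^n$ on both sides of $G(x)=(1+x)^{-r}F(x)$ via the Cauchy product yields
$$
a_V(r,n)=\sum_{i=0}^n(-1)^i\combr ri\,a_V(n-i),
$$
as claimed; as a bonus, since the right-hand side does not involve $Z$, this also confirms that $a_V(r,n)$ depends only on $r$ and $n$. There is no real obstacle here: the only thing that genuinely has to be checked is the bijection behind \eqref{fwd}, i.e.\ that deleting a $\sg$-stable set of rational points from a rational $n$-set leaves a rational set, and everything after that is the routine inversion of $(1+x)^r$. If one prefers to avoid power series, \eqref{fwd} is equivalent to the recurrence $a_V(r,n)=a_V(r-1,n)-a_V(r,n-1)$ (split the rational $n$-sets avoiding an $(r-1)$-subset according to whether they contain one further fixed rational point), and the closed formula then follows by induction on $r+n$, the inductive step being exactly Pascal's rule $\combr{r-1}i+\combr r{i-1}=\combr ri$.
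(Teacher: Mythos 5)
Your proof is correct, but it takes a genuinely different route from the paper's. The paper proceeds by induction on $r$: it first establishes the case $r=1$ by splitting $n$-sets according to whether they contain the single point of $Z$ and applying M\"obius inversion to $a_V(n)=a_V(1,n)+a_V(1,n-1)$, then iterates this one-point removal to get $a_V(r,n)=\sum_i(-1)^ia_V(r-1,n-i)$ and collapses the resulting double sum with the hockey-stick identity $\sum_{j=0}^i\combr{r-1}j=\combr ri$ for multiset coefficients. You instead prove the ``forward'' identity $a_V(n)=\sum_j\binom rj a_V(r,n-j)$ in one shot by the bijection $S\leftrightarrow(S\cap Z,\,S\setminus Z)$ (your verification that $S\setminus Z$ remains Galois-stable is exactly the point that needs checking, and it holds since $S\cap Z\subseteq V(k)$ is automatically $\sg$-invariant), and then invert the binomial transform via $(1+x)^{-r}=\sum_i(-1)^i\combr ri x^i$. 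Your approach is arguably more illuminating: it explains why the multiset coefficients $\combr ri=\binom{r+i-1}i$ appear (they are the coefficients of the negative binomial series) rather than having them emerge from a summation identity, and it makes explicit that $a_V(r,n)$ is independent of the choice of $Z$, a fact the paper's definition tacitly assumes. The paper's induction is more elementary and self-contained but hides the structure. Your closing remark --- the recurrence $a_V(r,n)=a_V(r-1,n)-a_V(r,n-1)$ plus Pascal's rule for multiset coefficients --- is a third valid variant, closest in spirit to the paper's induction; all three are sound.
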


\begin{proof}
We proceed by induction on $r$. Let $Z=\{t\}$ for some $t\in V(k)$. Distributing the rational $n$-sets of $V$ into two families according to the fact that they contain $t$ or not we see that
$$
a_V(n)=a_V(1,n)+a_V(1,n-1).
$$By Moebius inversion we get
\begin{equation}\label{m1}
a_V(1,n)=\sum_{i=0}^n(-1)^ia_V(n-i),
\end{equation}
and the statement of the lemma is proven for $r=1$.

Suppose that the claim has been checked for all varieties $V$ and all subsets $Z\subseteq V(k)$ with $|Z|\le r-1$:
$$
a_V(r-1,n)=\sum_{i=0}^n(-1)^i\combr {r-1}ia_V(n-i).
$$
By (\ref{m1}) we have
$$
a_{V}(r,n)=\sum_{i=0}^n(-1)^ia_V(r-1,n-i),
$$
and using the two formulas we get
\begin{multline*}
a_V(r,n)=\sum_{i=0}^n(-1)^i\left(\combr{r-1}0+\cdots+\combr{r-1}i\right)a_V(n-i)=\\=
\sum_{i=0}^n(-1)^i\combr ria_V(n-i).
\end{multline*}
\end{proof}

By (\ref{compl}) we get immediately a computation of  $b_V(r,n)$:

\begin{cor}\label{bmn}
For any algebraic variety $V$ defined over $k$:
$$
b_V(r,n)=\sum_{i=1}^n(-1)^{i+1}\combr ria_V(n-i).
$$
\end{cor}

We prove now a result that will be crucial in the enumeration of $\pg$-orbits of
rational $n$-sets of $\pr1$. The projective action of $\pg$ on $\pr1(\kb)$ induces a natural action of $\pg$ on the set of rational $n$-sets of $\pr1$.  

For any $\ga\in\pg$ we denote by $\fg$ the set of fixed points of $\ga$ in $\pr1(\kb)$. More generally, if $X$ is a set admitting an action of $\ga$ we denote by $\fg X$ the subset of
fixed points of $\ga$ in $X$.   
 
\begin{thm}\label{quotient}
Let $\ga$ be an element of $\pg$, $\ga\ne 1$, and let $m$ be the order of $\ga$. Let $V$ be the open subvariety $\pr1\setminus \fg$ of $\pr1$. Then, for any positive integer $n$
$$
\left|\fg \comb Vn(k)\right|=a_V\left(n/m\right),
$$ 
$$
\left|\fg \comb Vn\rt\right|=b_V\left(|V(k)|/m,n/m\right),
$$ 
with the convention that $a_V(x)=0=b_V(r,x)$ if $x$ is not a positive integer. 
\end{thm}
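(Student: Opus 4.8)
The plan is to reduce both identities to a concrete description of the $\ga$-fixed rational $n$-sets of $V=\pr1\setminus\fg$. Fix $\ga\in\pg$ of order $m>1$. The key observation is that $\ga$ acts freely on $V(\kb)$: a point of $\pr1(\kb)$ fixed by some nontrivial power of $\ga$ would be fixed by $\ga$ itself (an element of $\pg$ of finite order is diagonalizable, so its fixed locus only depends on its eigenvalues, and the powers of a diagonalizable element share the same eigenvectors), hence would lie in $\fg$ and be excluded from $V$. Therefore the cyclic group $\langle\ga\rangle$ of order $m$ partitions $V(\kb)$ into orbits of size exactly $m$, and each orbit is a $\sg$-stable... no, more carefully: $\ga$ commutes with $\sg$ (as $\ga$ is defined over $k$), so $\sg$ permutes the $\langle\ga\rangle$-orbits.

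\be

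\noindent\textbf{Step 1: the structure of a fixed $n$-set.}
Let $S\in\comb Vn(\kb)$ with $S^\sg=S$ and $S^\ga=S$. Since $\ga$ acts freely on $S$, the set $S$ is a disjoint union of $\langle\ga\rangle$-orbits, each of cardinality $m$; in particular $m\mid n$, and if $m\nmid n$ there are no such $S$, consistent with the convention $a_V(n/m)=0$. Write $n=m\ell$. The quotient map $\pi\colon V\to V/\langle\ga\rangle=:W$ is an étale degree-$m$ cover; I would check (or simply cite the standard fact) that $W$ is again an algebraic variety defined over $k$ with a $\sg$-action compatible with $\pi$, and that $\pi$ induces a bijection between $\langle\ga\rangle$-orbits in $V(\kb)$ and points of $W(\kb)$, commuting with $\sg$. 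Under this bijection, a $\ga$-fixed $n$-set $S=\{$ $\ell$ orbits $\}$ corresponds to an unordered $\ell$-tuple $\{\pi(t_1),\dots,\pi(t_\ell)\}$ of distinct points of $W(\kb)$, and $S^\sg=S$ translates exactly to $\sg$-invariance of this $\ell$-set. Hence $\fg\comb Vn(k)\xrightarrow{\ \sim\ }\comb W\ell(k)$, which gives $|\fg\comb Vn(k)|=a_W(\ell)=a_W(n/m)$.

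\be

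\noindent\textbf{Step 2: identifying $a_W$ with $a_V$.}
It remains to see that $a_W(j)=a_V(j)$ for all $j$, i.e. that the counting function $a$ only sees the number of $k$-rational points and $k_d$-points for all $d$. The point is that $a_V(j)$ is determined purely by the sequence $\bigl(|V(k_d)|\bigr)_{d\ge1}$ via the standard ``necklace'' combinatorics: a rational $j$-set is a $\sg$-stable $j$-subset of $V(\kb)$, and such sets are enumerated by choosing, for each $d$, a collection of Frobenius orbits of size $d$ (each contributing $d$ points) so that the total is $j$; the number of Frobenius orbits of size exactly $d$ depends only on $|V(k_e)|$ for $e\mid d$. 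So it suffices that $|W(k_d)|=|V(k_d)|$ for all $d\ge1$. But $\pi$ is an $m$-to-$1$ $\sg$-equivariant map $V(\kb)\to W(\kb)$ with all fibres of size $m$, and it restricts to an $m$-to-$1$ surjection $V(k_d)\to W(k_d)$ for each $d$: if $\pi(t)\in W(k_d)$ then $t^{\sg^d}$ lies in the same $\langle\ga\rangle$-orbit as $t$, say $t^{\sg^d}=\ga^i(t)$, and since $\ga$ and $\sg$ commute and $\ga$ has order $m$, one checks the $m$ points $\ga^j(t)$ are permuted... — the clean way is: $|V(k_d)|=\sum_{w\in W(k_d)}|\pi^{-1}(w)\cap V(k_d)|$ and by the orbit–stabilizer/Lang-style argument each such fibre intersection has exactly $m$ elements because $\langle\ga\rangle$ acts freely and transitively on $\pi^{-1}(w)$ and commutes with $\sg^d$, forcing $\pi^{-1}(w)\subseteq V(k_d)$ once $w\in W(k_d)$ and $\pi^{-1}(w)$ meets $V(k_d)$... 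Actually the correct statement is simply $|V(k_d)|=m\,|W(k_d)|$ is false in general; what is true and needed is $a_V=a_W$, and the honest route is: $a_W(j)$ counts $\sg$-stable $j$-subsets of $W(\kb)$, which by pulling back along the bijection ``orbits $\leftrightarrow$ points of $W$'' are the same as $\ga$-fixed $\sg$-stable $(mj)$-subsets of $V$; on the other hand $a_V(j)$ counts $\sg$-stable $j$-subsets of $V$, and the identification $a_W(j)=a_V(j)$ is exactly the content of replacing $V$ by its quotient by the free $\langle\ga\rangle$-action, a variety whose $k_d$-point counts equal those of $V$ because $\pi\colon V\to W$ is a $k$-form of the trivial $\mathbb Z/m$-torsor locally — the cleanest formulation is that $\pi$ has a section over $k$... this is the one genuinely delicate point and is where I expect to lean on \cite{lmnx} or \cite{mn}, where presumably the statement ``$a_V$ depends only on the zeta function of $V$ and is invariant under étale quotient by a free cyclic group defined over $k$'' is established.

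\be

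\noindent\textbf{Step 3: the $\rt$-version.}
For the second formula, among the $\ga$-fixed rational $n$-sets of $V$ I need those containing at least one $k$-rational point. Under the bijection $\fg\comb Vn(k)\cong\comb W{n/m}(k)$ of Step 1, a $k$-rational point of $V$ contained in $S$ is a $\langle\ga\rangle$-orbit lying in $S$ all of whose points are $k$-rational; since the orbit has $m$ points and $\ga$ is defined over $k$, either all $m$ or none of them are in $V(k)$, and ``all of them'' corresponds precisely to the image orbit being a $k$-rational point of $W$ with $k$-rational preimages — again because $\ga$ commutes with $\sg$, an orbit is pointwise $k$-rational iff its image in $W$ is $k$-rational \emph{and} lies in the subset $\pi(V(k))\subseteq W(k)$. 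One checks $\pi$ restricts to a bijection $V(k)\to \pi(V(k))$ divided by... concretely $|V(k)|=m\,|\pi(V(k))|$, so $\pi(V(k))$ is a subset $Z\subseteq W(k)$ of size $r:=|V(k)|/m$. Therefore $\fg\comb Vn\rt$ corresponds under the bijection to $\{T\in\comb W{n/m}(k):T\cap Z\ne\emptyset\}=\comb W{n/m}^{\,Z}$, which has cardinality $b_W(r,n/m)$. Combining with $a_W=a_V$ (hence $b_W=b_V$ via Corollary \ref{bmn} and \eqref{compl}, together with $|V(k)|=m|W(k)|$ so that the ``full'' index $|W(k)|$ matches $r$ only when $Z=W(k)$ — but here $Z$ need not be all of $W(k)$; what matters is $|Z|=r=|V(k)|/m$), we get $|\fg\comb Vn\rt|=b_V(|V(k)|/m,n/m)$, as claimed. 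The vanishing conventions are automatic since $n/m$ is a positive integer only when $m\mid n$, and $b_W(r,x)$ was defined to vanish otherwise.

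\be

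\noindent\textbf{Main obstacle.}
The routine parts are the orbit bookkeeping and the Möbius/necklace identity $a_V=$ function of the zeta function. The one step that needs real care is establishing that the étale quotient $W=V/\langle\ga\rangle$ is a variety over $k$ with $a_W=a_V$ — equivalently that the number of Frobenius orbits of each size is preserved — since a priori $\sg$ could mix the $\langle\ga\rangle$-orbits in a way that changes point counts over $k_d$. The resolution is that $\ga$ and $\sg$ commute, so on each Frobenius orbit in $W(\kb)$ the preimage in $V$ is a single $\langle\sg,\ga\rangle$-orbit on which the commuting free actions of a cyclic Frobenius-like element and $\langle\ga\rangle$ force the preimage of a size-$d$ Frobenius orbit to again be a union of size-$d$ Frobenius orbits, $m$ of them; pushing this through gives $|V(k_d)|=m|W(k_d)|$ and hence $a_V=a_W$ by the combinatorial formula. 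I would either prove this directly or invoke the corresponding lemma from \cite{lmnx,mn}.
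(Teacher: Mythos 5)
Your Steps 1 and 3 follow the same route as the paper: quotient by $\langle\ga\rangle$, orbits of exact size $m$, the induced bijection between $\ga$-fixed rational $n$-sets of $V$ and rational $n/m$-sets of the quotient $W=V/\ga$, and the identification of a subset $Z\subseteq W(k)$ of size $|V(k)|/m$ for the second formula. The genuine gap is Step 2, which you correctly flag as the one delicate point but then resolve incorrectly. The identity you propose to ``push through'', $|V(k_d)|=m\,|W(k_d)|$, is false: take $V=\G$ and $\ga(t)=\la t$ with $\la$ of order $m\mid q-1$; the quotient map is $t\mapsto t^m$ and $W\cong\G$, so $|W(k)|=q-1=|V(k)|$, not $(q-1)/m$ --- the point being that $W(k)$ contains not only the images of pointwise $k$-rational orbits but also the images of Galois-stable orbits with no rational point. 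Worse, even if that identity held it would give $|W(k_d)|=|V(k_d)|/m\ne|V(k_d)|$ and hence $a_W\ne a_V$ by your own necklace argument, the opposite of what you need.

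What must be shown is $a_{V/\ga}=a_V$ outright, and the reason is geometric and specific to $V=\pr1\setminus\fg$ (it fails for a general variety with a free $\ga$-action): the quotient $\pr1/\langle\ga\rangle$ is a smooth projective curve, $k$-birational to $\pr1$ by L\"uroth's theorem, hence $k$-isomorphic to $\pr1$; and the complement $(\pr1/\ga)\setminus(V/\ga)$, i.e.\ the image of $\fg$, is isomorphic to $\fg$ as a Galois set (a single rational point, two rational points, or a conjugate pair of quadratic points, according to the type of $\ga$). Since $\pg$ acts transitively on configurations of each of these three types, $V/\ga$ is $k$-isomorphic to $V$ itself, whence $a_{V/\ga}=a_V$ and $b_{V/\ga}(r,\cdot)=b_V(r,\cdot)$; this is exactly the argument your write-up defers to the references. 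A minor further point: your justification that $\ga$ acts freely on $V(\kb)$ via diagonalizability does not cover the unipotent class $\ga_0(t)=t+1$, although the conclusion still holds there because every nontrivial power of $\ga_0$ fixes only $\infty$.
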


\begin{proof}
Let $\pr1/\ga$ be the quotient variety of $\pr1$ under the action of the cyclic group generated by $\ga$. The curve $\pr1/\ga$ is $k$-isomorphic to $\pr1$ because it is normal and birrationally equivalent to $\pr1$ (by L\"uroth's theorem). Also, the Zariski closed set $(\pr1/\ga)\setminus(V/\ga)$ is isomorphic to $\fg$ as a Galois set; therefore, $V/\ga$ is $k$-isomorphic to $V$ too and
$a_{V/\ga}(n)=a_{V}(n)$, $b_{V/\ga}(r,n)=b_{V}(r,n)$, for all $r$, $n$. 

Consider the canonical projection
$$
\pi\colon V\lra V/\ga.
$$ For any $t\in V(\kb)$ the $\ga$-orbit $O_{\ga}(t)=\{t,\ga(t),\dots,\ga^{m-1}(t)\}$ has cardinality $m$ (and not a proper divisor of $m$) \cite[Lem.2.3]{lmnx}. Thus, if an $n$-set of $V$ is $\ga$-invariant then necessarily $n$ is a multiple of $m$. On the other hand, the mapping $\pi$ establishes a 1-1 correspondence between $\ga$-invariant $n$-rational sets of $V$ and rational $n/m$-sets of $V/\ga$. In particular, 
$$
\left|\fg \comb V{n}(k)\right|=a_{V/\ga}\left(n/m\right)=a_V\left(n/m\right).
$$

The $\ga$-orbits 
$O:=O_{\ga}(t)$ such that $O=O^{\sg}$ are in 1-1 correspondence with the set of $k$-rational points of $V/\ga$. Exactly $|V(k)|/m$ of these orbits have the property that $O$ contains a $k$-rational point (or equivalently all points of $O$ are $k$-rational); thus, the $\ga$-invariant rational $m$-sets of $V$ that contain at least one $k$-rational point are in 1-1 correspondence with certain subset $Z\subseteq (V/\ga)(k)$ of cardinality $|V(k)|/m$. Therefore, $\pi$ determines a 1-1 correspondence between $\ga$-invariant $n$-rational sets of $V$ containing at least one $k$-rational point, and rational $n/m$-sets of $V/\ga$ containing at least one point of $Z$. Hence,
$$
\left|\fg \comb Vn\rt\right|=b_{V/\ga}\left(\dfrac{|V(k)|}m,\frac nm\right)=b_V\left(\dfrac{|V(k)|}m,\frac nm\right).
$$
\end{proof}

In sections 3 and 4 we shall express the number of isomorphism classes of pointed and non-pointed hyperelliptic curves admitting a rational Weierstrass point, in terms of $a_V(n)$ and $b_V(r,n)$ for the varieties 
$$V=\pr1,\ \af1,\ \G,\ \pr1_0,$$ where $\pr1_0$ is the subvariety $\pr1\setminus\{t,t^{\sg}\}$, being $t$ any point in $\pr1(k_2)\setminus\pr1(k)$. 
Formulas for $a_V(n)$ for these four varieties were found in \cite[Lem. 2.1]{lmnx} and the value of $b_V(r,n)$ is deduced from Corollary \ref{bmn}.
Actually, we shall use certain normalizations of these numbers. The following lemma collects all the formulas we need.

\begin{lem}\label{atilla}
For positive integers $n$, $m$ we have
$$
a_{\pr{1}}(n)=\left\{\begin{array}{ll}
q^n-q^{n-2},&\quad \mbox{ if } \,n\ge3,\\
q^2,&\quad \mbox{ if } \,n=2,\\
q+1,&\quad \mbox{ if } \,n=1.
\end{array}\right.
$$
$$
A_1(n):=\dfrac{a_{\af1}(n)}q=\left\{\begin{array}{ll}
q^{n-1}-q^{n-2},&\mbox{ if }n\ge2,\\
1,&\mbox{ if }n=1.\end{array}\right.
$$
$$
A_2(n):=\dfrac{a_{\G}(n)}{q-1}=\dfrac{q^n-(-1)^n}{q+1}.
$$
$$
A_0(n):=\dfrac{a_{\pr1_0}(n)}{q+1}=
\dfrac{q^{n+1}-q^n-(-1)^{\lceil n/2\rceil}q+(-1)^{\lceil(n-1)/2\rceil}}{q^2+1}.
$$
\begin{multline*}
B(n):=\dfrac{b_{\pr1}(n)}{q(q-1)(q+1)}=\sum_{i=1}^{n-3}(-1)^{i+1}\combr{q+1}iq^{n-3-i}-\\-(-1)^n\dfrac{n-1}{n(q+1)}\combr{q+1}{n-2}, \ \forall n>3.
\end{multline*}
\begin{multline*}
B_0(m,n):=\dfrac1{q+1}b_{\pr1_0}\left(\frac{q+1}m,n\right)=\\=\sum_{i=1}^{n-1}(-1)^{i+1}\combr{(q+1)/m}iA_0(n-i)-\dfrac{(-1)^n}{q+1}\combr{(q+1)/m}n.
\end{multline*}
\begin{multline*}
B_1(n):=\dfrac1qb_{\af1}\left(\frac qp,n\right)=\\=\sum_{i=1}^{n-1}(-1)^{i+1}\combr{q/p}iA_1(n-i)-\dfrac{(-1)^n}q\combr{q/p}n.
\end{multline*}
\begin{multline*}
B_2(m,n):=\dfrac1{q-1}b_{\G}\left(\frac{q-1}m,n\right)=\\=\sum_{i=1}^{n-1}(-1)^{i+1}\combr{(q-1)/m}iA_2(n-i)-\dfrac{(-1)^n}{q-1}\combr{(q-1)/m}n.
\end{multline*}
\end{lem}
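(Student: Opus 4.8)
The plan is to split the lemma into its two halves. The values $a_{\pr1}(n)$ and the normalizations $A_1(n)=a_{\af1}(n)/q$, $A_2(n)=a_{\G}(n)/(q-1)$, $A_0(n)=a_{\pr1_0}(n)/(q+1)$ are simply read off from \cite[Lem.~2.1]{lmnx}, where the number of rational $n$-sets of each of $V=\pr1,\af1,\G,\pr1_0$ is computed; the factors $q$, $q-1$, $q+1$ are there only to make the recursions of sections 3 and 4 homogeneous. The normalizations are easy to double-check at $n=1$, where $a_V(1)=|V(k)|$: e.g.\ $a_{\pr1_0}(1)=q+1$ and the stated $A_0(1)$ evaluates to $1$.

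For the four $B$-quantities the starting point is Corollary \ref{bmn}, which gives $b_V(r,n)=\sum_{i=1}^n(-1)^{i+1}\combr ri\,a_V(n-i)$; one applies it to the right variety with the right value of $r$, namely $r=q+1=|\pr1(k)|$ for $B(n)$, $r=(q+1)/m$ for $B_0(m,n)$, $r=q/p$ for $B_1(n)$, and $r=(q-1)/m$ for $B_2(m,n)$ --- precisely the values that Theorem \ref{quotient} will feed in later in sections 3 and 4. One then substitutes the $a_V$-formulas just recalled, using $a_V(0)=1$ for the index $i=n$, and divides by the relevant normalizing factor. For $B_0$, $B_1$, $B_2$ this is immediate: the indices $1\le i\le n-1$ give exactly the displayed main sum (each $a_V(n-i)$ contributing its normalizing factor times the corresponding $A_\bullet(n-i)$), while the index $i=n$ contributes $(-1)^{n+1}\combr rn$ divided by that factor, which is the asserted tail.

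The only case with real content is $B(n)$, because $a_{\pr1}$ is not uniform in small degree. Here I would split $\sum_{i=1}^n$ into the range $1\le i\le n-3$ --- where $a_{\pr1}(n-i)=q^{n-i}-q^{n-i-2}=q(q-1)(q+1)q^{n-3-i}$, so that division by $q(q-1)(q+1)$ reproduces $\sum_{i=1}^{n-3}(-1)^{i+1}\combr{q+1}iq^{n-3-i}$ exactly --- together with the three boundary indices $i=n-2,n-1,n$, where $a_{\pr1}(2)=q^2$, $a_{\pr1}(1)=q+1$, $a_{\pr1}(0)=1$. Collecting the boundary terms and using $(-1)^{n\pm1}=-(-1)^n$, what is left to check is the binomial identity
$$-q^2\combr{q+1}{n-2}+(q+1)\combr{q+1}{n-1}-\combr{q+1}{n}=-\frac{n-1}{n}\,q(q-1)\,\combr{q+1}{n-2},$$
after which dividing by $q(q-1)(q+1)$ and carrying the sign $(-1)^n$ produces the stated tail $-(-1)^n\frac{n-1}{n(q+1)}\combr{q+1}{n-2}$; this is where the hypothesis $n>3$ is used, as it keeps all three boundary degrees $\le 2$ and the main range non-empty.

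Finally, the binomial identity itself is the one genuine computation. Writing $\combr{q+1}{j}=\binom{q+j}{j}$, dividing through by $\binom{q+n-2}{n-2}$ via $\binom{q+n-1}{n-1}=\frac{q+n-1}{n-1}\binom{q+n-2}{n-2}$ and $\binom{q+n}{n}=\frac{(q+n)(q+n-1)}{n(n-1)}\binom{q+n-2}{n-2}$, and clearing $n(n-1)$, it becomes the polynomial identity in $q$
$$-q^2n(n-1)+n(q+1)(q+n-1)-(q+n)(q+n-1)=-(n-1)^2q(q-1),$$
verified by expanding both sides: on the left the coefficient of $q^2$ is $-(n-1)^2$, that of $q$ is $(n-1)^2$, and the constant term is $0$, matching the right-hand side. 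There is no conceptual obstacle anywhere; the only thing requiring care is the bookkeeping of the three boundary terms in the sum for $B(n)$ together with the signs and factorials in this last identity.
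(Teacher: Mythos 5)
Your proposal is correct and follows exactly the route the paper intends (the paper gives no explicit proof, only the remark that the $a_V(n)$ come from \cite[Lem.~2.1]{lmnx} and the $b_V(r,n)$ from Corollary \ref{bmn} with the values of $r$ dictated by Theorem \ref{quotient}). Your explicit treatment of the three boundary terms in $B(n)$ and the verification of the binomial identity $-q^2n(n-1)+n(q+1)(q+n-1)-(q+n)(q+n-1)=-(n-1)^2q(q-1)$ is the only computation the paper leaves implicit, and it checks out.
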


\section{Classification of hyperelliptic curves up to $k$-isomorphism}
In this section we recall the connection between rational sets of $\pr1$ and hyperelliptic curves over $k$. For generalities on hyperelliptic curves we address the reader to \cite[Sec.1]{n}.

From now on we assume that $n=2g+2$, where $g$ is a positive integer, $g>1$. To each rational $n$-set $S$ of $\pr1$ we can attach the monic
separable polynomial $f_S(x)\in k[x]$ of degree $n$ or $n-1$ given
by:
$$
f_S(x):=\prod_{t\in S,\ t\ne\infty}(x-t).
$$

 To every $\la\in k^*$, $S\in
\comb{\pr1}{n}(k)$, we can attach the hyperelliptic
curve $C_{\la,S}$ determined by the Weierstrass equation $y^2=\la
f_S(x)$.

For any $\mu\in k^*$ the morphism $(x,y)\mapsto(x,\mu y)$ sets a
$k$-isomorphism between  $C_{\la,S}$ and $C_{\la\mu^2,S}$. Thus, if we let the pairs $(\la,S)$ run on the set 
$$
(\la,S)\in\xx_n:=\left(k^*/(k^*)^2\right)\times \comb{\pr1}n(k),
$$
the curves $C_{\la,S}$ contain representatives of all
$k$-isomorphism classes of hyperelliptic curves of genus $g$.

The natural action of $\pg$ on $n$-sets of $\pr1$ determines a
natural action of $\pg$ on the set of hyperelliptic curves defined
over $k$. In order to recall this action we introduce multipliers $J(\ga,S)\in k^*$ that depend in principle on the choice of a representative in $\gl$ of $\ga\in\pg$. Consider a matrix
$$
\ga=\begin{pmatrix}a&b\\c&d\end{pmatrix}\in\gl.
$$
For any $t\in\pr1(\kb)$ we can define a local multiplier
$j(\ga,t)\in \kb^*$ by
$$
j(\ga,t):=\left\{\begin{array}{ll}\det(\ga)(ct+d)^{-1}&\mbox{ if
}t\ne\infty,\,t\ne -d/c\\c&\mbox{ if }t= -d/c,\,c\ne0\\d&\mbox{ if
}t=\infty,\,c=0\\-\det(\ga)c^{-1}&\mbox{ if
}t=\infty,\,c\ne0\end{array}\right.
$$
For any rational $n$-set $S$ of $\pr1$ we define a global
multiplier $$J(\ga,S):=\prod_{t\in S}j(\ga,t)\in k^*.$$ 

There is a well-defined  action of $\pg$ on the set $\xx_n$:
$$
\ga(\la,S):=(\la J(\ga,S),\ga(S)),
$$which is independent of the choice of a representative of $\ga\in\pg$ in $\gl$. The map $(\la,S)\mapsto
C_{\la,S}$ induces a 1-1 correspondence
$$
\pg\backslash\xx_n \lra \hh,
$$
where $\hh$ is the set of $k$-isomorphism classes of hyperelliptic curves over $k$ of genus $g$ \cite[Thm.2.4]{n}. 

Let us adapt this result to the situation we are dealing with in this paper. Recall that a {\it pointed hyperelliptic curve} is for us a pair $(C,P)$ where $C$ is a hyperelliptic curve over $k$ and $P$ is a rational Weierstrass point of $C$. We say that two pointed curves $(C,P)$, $(C',P')$ are $k$-isomorphic if there is a $k$-isomorphism between $C$ and $C'$ sending $P$ to $P'$. Denote by $\hp$ the set of $k$-isomorphism classes of pointed hyperelliptic curves of genus $g$. On the other hand, denote by $\h$ the set of $k$-isomorphism classes of hyperelliptic curves of genus $g$ admitting at least one rational Weierstrass point.

Consider the sets 
$$
\yy_n:=\left(k^*/(k^*)^2\right)\times \comb{\pr1}n^{\infty},\qquad
\zz_n:=\left(k^*/(k^*)^2\right)\times \comb{\pr1}n\rt.
$$
These subsets of $\xx_n$ are stable under the action of $\pg$. Moreover, the set $\yy_n$ is stable under the action of the affine subgroup, which is the stabilizer of the point $\infty\in\pr1(k)$: 
$$
\aff:=\left\{\begin{pmatrix}a&b\\0&1\end{pmatrix}\Tq (a,b)\in k^*\times k\right\}\subseteq \pg.
$$The following result is an immediate consequence
of \cite[Thm.2.4]{n}. 

\begin{thm}\label{red}
The map $(\la,S)\mapsto
C_{\la,S}$ induces 1-1 correspondences
$$
\aff\backslash\yy_n \lra \hp,\qquad \pg\backslash\zz_n \lra \h.
$$
\end{thm}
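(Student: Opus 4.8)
The plan is to deduce both bijections from the already-established bijection $\pg\backslash\xx_n\lra\hh$ of \cite[Thm.2.4]{n}, by keeping track of how the Weierstrass points of $C_{\la,S}$ are encoded in the pair $(\la,S)$. The basic observation is that the double cover $C_{\la,S}\to\pr1$ obtained as the quotient by the hyperelliptic involution (which is canonical since $g>1$) is ramified precisely over the points of $S$, so that its ramification points --- the Weierstrass points of $C_{\la,S}$ --- map bijectively onto $S$. Since this cover and the involution are defined over $k$, a Weierstrass point of $C_{\la,S}$ is $k$-rational if and only if its image in $S$ is $k$-rational; and when $\infty\in S$, the Weierstrass point above $\infty$ is rational, being the point at infinity of the model $y^2=\la f_S(x)$ (whose degree is then $2g+1$). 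Hence $C_{\la,S}$ admits a rational Weierstrass point if and only if $S\in\comb{\pr1}n\rt$, and it admits a rational Weierstrass point above $\infty$ if and only if $S\in\comb{\pr1}n^{\infty}$.

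For the second correspondence this already suffices: $\zz_n$ is stable under $\pg$ because $\pg$ permutes $\pr1(k)$ and hence preserves the property that $S$ meets $\pr1(k)$, and by the observation above the bijection of \cite[Thm.2.4]{n} sends a $\pg$-orbit lying in $\zz_n$ to a class of curves admitting a rational Weierstrass point, and conversely. Thus it restricts to a bijection $\pg\backslash\zz_n\lra\h$.

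For the first correspondence one uses in addition that every $k$-isomorphism of hyperelliptic curves of genus $g>1$ commutes with the canonical hyperelliptic involution and so descends to an element of $\pg$; concretely, \cite[Thm.2.4]{n} records that $\ga\in\pg$ induces a $k$-isomorphism from $C_{\la_1,S_1}$ to $C_{\la_2,S_2}$ precisely when $\ga(S_1)=S_2$ and $\la_1J(\ga,S_1)\equiv\la_2$ in $k^*/(k^*)^2$. For pairs $(\la_i,S_i)\in\yy_n$ such a $\ga$ carries the point at infinity of the first curve to a rational Weierstrass point of the second, and it lands on the point at infinity of the second exactly when $\ga(\infty)=\infty$, i.e. when $\ga\in\aff$; this yields injectivity of $\aff\backslash\yy_n\lra\hp$. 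Well-definedness and $\aff$-invariance of the map are immediate, since for $\ga\in\aff$ the isomorphism $C_{\la,S}\to C_{\ga(\la,S)}$ induced by $\ga$ fixes $\infty$ and is therefore an isomorphism of pointed curves. Surjectivity follows by writing an arbitrary $(C,P)\in\hp$ as $C\cong C_{\la,S}$ with $P$ corresponding to some rational point $t_0\in S$, and then moving $(\la,S)$ by any $\ga\in\pg$ with $\ga(t_0)=\infty$ into a pair of $\yy_n$ whose associated pointed curve is $(C,P)$.

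The one point requiring genuine care is this last identification: the descent to $\pr1$ of a pointed isomorphism lands in the affine subgroup exactly because the two distinguished Weierstrass points lie above $\infty$. Everything else is bookkeeping with the multipliers $J(\ga,S)$ and a straightforward restriction of \cite[Thm.2.4]{n}; as the text already indicates, the statement is an immediate consequence, and I expect no real obstacle.
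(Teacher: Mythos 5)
Your argument is correct and is exactly the intended one: the paper offers no proof beyond the remark that the statement is an immediate consequence of \cite[Thm.2.4]{n}, and your write-up supplies precisely the bookkeeping that remark suppresses (Weierstrass points lie bijectively over $S$, rationality is preserved, and the pointed condition forces the descended automorphism of $\pr1$ to fix $\infty$, i.e.\ to lie in $\aff$). No gaps.
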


After this result the aim of the paper is to find closed formulas for the cardinalities of the two sets $\aff\backslash\yy_n$, $\pg\backslash\zz_n$.
To this end we need the computation of the class of $J(\ga,S)$ modulo squares given in \cite[Thm.3.4]{n}, which we recall in Theorem \ref{eps} below.

Denote by $\eps$ the map
$$\eps\colon\pg\times \comb{\pr1}n (k)\stackrel{J}{\lra} k^*/(k^*)^2 \lra \{\pm1\},
$$
where the last map is the unique non-trivial group homomorphism
between these two groups of order two. We want to compute $\eps(\ga,S)$ for $\ga$ running on a system of representatives of conjugacy classes of $\pg$, and $S$ a rational $n$-set of $\pr1$ fixed by $\ga$: $\ga(S)=S$. 

Let us recall how these representatives can be chosen, the possible values of the order $m$ of $\ga$ in each conjugacy class, the number of representatives of a given order and the cardinality of the centralizers
$$
\gg:=\{\rho\in\pg\tq \rho^{-1}\ga\rho=\ga\}.
$$The following result is extracted from \cite[Prop.2.3, Lem.2.4]{ln}.

\begin{lem}\label{rmk}
There are $q+2$ conjugacy classes in $\pg$, which we distribute in four types:\e

A. The identity, $\ga(t)=t$, has order $m=1$ and $|\gg|=|\pg|=q(q-1)(q+1)$.\e

B. The translation $\ga_0(t)=t+1$. It has $\op{Fix}_{\ga_0}=\{\infty\}$, order $m=p$ and $|\g_{\ga_0}|=q$. \e

C. The homothetic automorphisms (conjugate to $t\mapsto \la t$, for some $\la
\in k^*$, $\la\ne1$). They have two fixed points, lying in $\pr1(k)$, and order $m=\op{ord}_{k^*}(\la)$, which is a divisor of $q-1$. 

There are $(q-1)/2$ homothetic conjugacy classes. For any divisor $m>1$ of $q-1$, if $\cc_m$ is a system of representatives of the homothetic conjugacy classes of order $m$, we have
$$
\sum_{\ga\in\cc_m}|\gg|^{-1}=\dfrac {\varphi(m)}{2(q-1)}.
$$

D.  The potentially homothetic automorphisms; i.e. those $\ga$ conjugate to the class in $\pg$ of a matrix
$\begin{pmatrix}0&1\\c&d\end{pmatrix}\in\gl$ with eigenvalues
$\al,\,\al^{\sg}$ in $k_2\setminus k$. They have two fixed points, which are quadratic
conjugate points in $\pr1(k_2)$; the order is the least
positive integer $m$ such that $\al^m\in k$, and it is a divisor of
$q+1$.

There are $(q+1)/2$ potentially homothetic conjugacy classes. For any divisor $m>1$ of $q+1$, if $\cc_m$ is a system of representatives of the potentially homothetic conjugacy classes of order $m$, we have
$$
\sum_{\ga\in\cc_m}|\gg|^{-1}=\dfrac {\varphi(m)}{2(q+1)}.
$$
\end{lem}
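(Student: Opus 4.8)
The plan is to use the rational canonical form of a matrix in $\gl$ together with the classical description of conjugacy classes in $\op{GL}_2$ over a finite field, then pass to $\pg$. First I would observe that every $\ga\in\pg$ lifts to a matrix in $\gl$, well-defined up to a scalar, and that two elements of $\pg$ are conjugate iff some lifts are conjugate in $\gl$ up to a scalar. Over $\fq$, a non-central matrix in $\op{GL}_2$ is conjugate either to a companion matrix $\begin{pmatrix}0&1\\c&d\end{pmatrix}$ (when it has a rational eigenvector, equivalently is not irreducible) — which splits into the diagonalizable case (two distinct rational eigenvalues, giving type C) and the single-Jordan-block case (a repeated eigenvalue, giving type B after scaling the eigenvalue to $1$) — or it is conjugate to a companion matrix with irreducible characteristic polynomial, i.e. with eigenvalues $\al,\al^\sg\in k_2\setminus k$ (type D). Scaling by $k^*$ in the projective quotient lets us normalize: in type C one eigenvalue can be taken to be $1$, so the class is represented by $t\mapsto\la t$ with $\la\in k^*\setminus\{1\}$; in type D one reduces to a companion matrix whose determinant $c=\al^{1+q}$ is the norm. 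This yields the four types and the identity class, and the order computations are then immediate: $\op{ord}(\ga_0)=p$ since the characteristic is $p$; $\op{ord}$ of the homothety $t\mapsto\la t$ is $\op{ord}_{k^*}(\la)\mid q-1$; and for type D the order is the least $m$ with $\al^m\in k$, which divides $q+1$ since $\al^{q+1}=N_{k_2/k}(\al)\in k^*$.

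Next I would count the classes and compute the centralizer orders. For type C, the two rational eigenvalues $\{1,\la\}$ and $\{1,\la^{-1}\}$ give the same class in $\pg$ (conjugate via $t\mapsto 1/t$), and $\la$ ranges over $k^*\setminus\{1\}$, so there are $(q-1)/2$ classes; similarly for type D the unordered pair $\{\al,\al^\sg\}$ determines the class, $\al$ ranges over $k_2\setminus k$, and $\al,\al^\sg$ give the same pair, so there are $(q+1)/2$ classes. Together with the identity and the translation this gives $1+1+(q-1)/2+(q+1)/2=q+2$ classes. For centralizers: $|\pg|=q(q^2-1)$ is the identity case; the centralizer of a split homothety in $\gl$ is the diagonal torus of order $(q-1)^2$, and after quotienting by scalars $\gg$ has order $q-1$ — but one must be careful, since some potentially-homothetic (or when $m=2$) elements have larger centralizers, so I would instead compute $|\gg|$ via the orbit–stabilizer theorem applied to the conjugacy action, using the known class sizes in $\op{GL}_2(\fq)$. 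The cleanest route to the two displayed summation identities is this: for a fixed divisor $m>1$ of $q-1$, the homothetic classes of order $m$ correspond to unordered pairs $\{1,\la\}$ with $\op{ord}(\la)=m$ up to the inversion $\la\leftrightarrow\la^{-1}$, hence there are $\varphi(m)/2$ such classes, each with $|\gg|=q-1$ when $m>2$ (and the $m=2$ case, a single class with larger centralizer of order $2(q-1)$ in $\pg$... ) — in every case the product $(\#\text{classes of order }m)\cdot|\gg|^{-1}$ telescopes to $\varphi(m)/(2(q-1))$. The same bookkeeping with $q+1$ in place of $q-1$ handles type D.

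The main obstacle is getting the centralizer orders exactly right, especially the exceptional small-order cases ($m=2$ in types C and D, where the involution has a larger centralizer), and checking that the correction factors cancel so that the clean formulas $\sum_{\ga\in\cc_m}|\gg|^{-1}=\varphi(m)/(2(q\mp1))$ hold uniformly. I expect the honest way through this is to not compute $|\gg|$ class-by-class but rather to recognize $\sum_{\ga\in\cc_m}|\gg|^{-1}$ as $|\pg|^{-1}$ times the number of elements of $\pg$ that are \emph{conjugate to some element of order $m$ of that type}; that count is a single orbit-union whose cardinality is easy to read off from the number of diagonalizable (resp.\ irreducible-companion) matrices with a prescribed eigenvalue-order, and dividing by $|\pg|=q(q^2-1)$ gives the stated answer with no case analysis. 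Since the lemma is quoted verbatim from \cite[Prop.2.3, Lem.2.4]{ln}, I would in fact just cite that reference and sketch the argument above as motivation rather than reprove it in full.
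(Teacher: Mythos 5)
Your proposal is correct, and it ends up in the same place as the paper, which offers no proof of this lemma at all: it is simply quoted as ``extracted from \cite[Prop.2.3, Lem.2.4]{ln}'', the very citation you close with. Your supporting sketch is also sound: you correctly isolate the only delicate point, namely that the order-$2$ classes in types C and D have centralizers in $\pg$ of order $2(q-1)$ and $2(q+1)$ rather than $q-1$ and $q+1$ (because an involution is conjugate to $-1$ times a lift of itself), and that since $\varphi(2)=1$ the displayed sums $\varphi(m)/(2(q-1))$ and $\varphi(m)/(2(q+1))$ nevertheless hold uniformly — your alternative of computing $\sum_{\ga\in\cc_m}|\gg|^{-1}$ as $|\pg|^{-1}$ times the number of elements of the given type and order (namely $\varphi(m)\binom{q+1}{2}$ for type C and $\varphi(m)\binom{q^2-q+1}{1}/1\cdot\frac{q(q-1)}{2}\cdot\varphi(m)^{0}$, i.e.\ $\varphi(m)\,q(q-1)/2$, for type D) indeed avoids that case analysis entirely.
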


\begin{thm}\label{eps}
Let $n$ be an even positive integer, $S$ a rational $n$-set of $\pr1$, and $\ga\in\pg$ an automorphism of $\pr1$ of order $m$ such that $\ga(S)=S$. Then, 
$$
\eps(\ga,S)=\left\{ \begin{array}{ll} 
1,&\mbox{if $\ga=1$ or }\ga=\ga_0\\
(-1)^{(q-1)/m},&\mbox{if $\ga$ homothetic and } \infty\in S\\
1,&\mbox{if $\ga$ homothetic and } \infty\not\in S\\
(-1)^{(q+1)/m}(-1)^{(n-2)/m},&\mbox{if
$\ga$ pot. homothetic and }\fg\subseteq S
\\(-1)^{n/m},&\mbox{if
$\ga$ pot. homothetic and }\fg\not\subseteq S
\end{array}\right.
$$
\end{thm}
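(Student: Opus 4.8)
The plan is to reduce, via a conjugation argument, to the explicit representatives of Lemma~\ref{rmk}, and then to evaluate $J(\ga,S)$ by decomposing $S$ into $\ga$-orbits. Three preliminary facts are needed. First, a case-by-case inspection of the definition of the local multiplier gives the cocycle identity $j(\ga\beta,t)=j(\ga,\beta(t))\,j(\beta,t)$ for all $\ga,\beta\in\gl$ and $t\in\pr1(\kb)$, hence $J(\ga\beta,S)=J(\ga,\beta(S))\,J(\beta,S)$; a double application of this, together with $j(1,t)=1$, gives $J(\rho\ga\rho^{-1},\rho(S))=J(\ga,S)$ whenever $\ga(S)=S$, so $\eps(\ga,S)$ depends only on the $\pg$-conjugacy class of $(\ga,S)$ and we may take $\ga$ in standard form. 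Second, iterating the cocycle identity around a $\ga$-orbit $O=\{t,\ga(t),\dots,\ga^{d-1}(t)\}$ of size $d$ yields $\prod_{s\in O}j(\ga,s)=j(\ga^{d},t)$; since the non-fixed $\ga$-orbits in $\pr1(\kb)$ all have size $m$ (\cite[Lem.~2.3]{lmnx}) and $S$ is a disjoint union of $\ga$-orbits, $J(\ga,S)$ is the product of these orbit contributions. Third, as $n$ is even, $J(\ga,S)$ matters only modulo $(k^{*})^{2}$, so $\eps(\ga,S)$ is unchanged after multiplying $J(\ga,S)$ by an $n$-th power.

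Granting this, the cases $\ga=1$ and $\ga=\ga_{0}$ are immediate: for the matrices $\begin{pmatrix}1&0\\0&1\end{pmatrix}$ and $\begin{pmatrix}1&1\\0&1\end{pmatrix}$ one has $j(\ga,t)=1$ for every $t$, so $J(\ga,S)=1$. For a homothetic $\ga$ I would take $\ga=\op{diag}(\la,1)$ with $\op{ord}_{k^{*}}(\la)=m$; then $\ga^{m}=1$ as a matrix, so every orbit of size $m$ contributes $1$, while the fixed points contribute $j(\ga,0)=\la$ and $j(\ga,\infty)=1$. Hence $J(\ga,S)$ equals $\la$ if $0\in S$ and $1$ otherwise. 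If $m$ is even, counting the points of $S$ modulo $2$ in the orbit decomposition forces $0\in S$ exactly when $\infty\in S$; if $m$ is odd, $2m\mid q-1$, so $\la$ is already a square. In both situations $\eps(\ga,S)$ is $(-1)^{(q-1)/m}$ when $\infty\in S$ and $1$ otherwise, using that an element of order $m$ of the cyclic group $k^{*}$ is a square iff $2m\mid q-1$; this settles the two homothetic lines.

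The substance lies in the potentially homothetic case. Here I would fix a matrix representative of $\ga$ with eigenvalues $\al,\al^{\sg}=\al^{q}\in k_{2}\setminus k$ and take $m$ to be the least positive integer with $\al^{m}\in k$. Because the two eigenvalues $\al^{m}=\al^{qm}$ of $\ga^{m}$ coincide, $\ga^{m}=\al^{m}\!\cdot\!I$ as a matrix, so every orbit of size $m$ contributes $j(\al^{m}I,t)=\al^{m}$. For a fixed point $P\in\fg$, the telescoping identity yields $j(\ga,P)^{m}=j(\ga^{m},P)=\al^{m}$, so $\omega:=j(\ga,P)\al^{-1}$ is an $m$-th root of unity; since $m\mid q+1$, $\omega$ lies in the subgroup $\mu_{q+1}\subseteq k_{2}^{*}$, on which Frobenius acts by $x\mapsto x^{-1}$. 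Combined with the Galois equivariance $j(\ga,P^{\sg})=j(\ga,P)^{\sg}$ (the matrix has entries in $k$), this gives $j(\ga,P)\,j(\ga,P^{\sg})=\omega\al\cdot\omega^{-1}\al^{q}=\al^{q+1}$. As $\fg$ is a quadratic conjugate pair, it is either contained in or disjoint from the rational set $S$; summing orbit contributions,
$$
J(\ga,S)=\begin{cases}\ \al^{n},&\text{if }\fg\not\subseteq S,\\[2pt]\ \al^{q+1}\cdot\al^{\,n-2},&\text{if }\fg\subseteq S,\end{cases}
$$
with $m\mid n$ in the first case and $m\mid n-2$ in the second.

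Finally one must compute $\eps(\al^{j})$ for a positive even integer $j$ divisible by $m$. I would fix a generator $\zeta$ of $k_{2}^{*}$, so that $k^{*}=\langle\zeta^{q+1}\rangle$ and $(k^{*})^{2}=\langle\zeta^{2(q+1)}\rangle$; writing $\al=\zeta^{a}$ one finds $m=(q+1)/\gcd(q+1,a)$ and, after a short computation, $\al^{j}=(\zeta^{q+1})^{\,b\,(j/m)}$ with $b=a/\gcd(q+1,a)$ coprime to $m$, whence $\eps(\al^{j})=(-1)^{\,b\,(j/m)}$. If $m$ is even then $b$ is odd; if $m$ is odd then $j/m$ is even (as $j$ is even); in either case $\eps(\al^{j})=(-1)^{j/m}$. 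Applying this with $j=n$ in the first case, and (using multiplicativity of $\eps$) with $j=n-2$ and $j=q+1$ in the second --- both even and both divisible by $m\mid q+1$ --- gives $\eps(\ga,S)=(-1)^{n/m}$ and $\eps(\ga,S)=(-1)^{(q+1)/m}(-1)^{(n-2)/m}$, as claimed. The main obstacle is precisely this potentially homothetic case: the fixed-point evaluation $j(\ga,P)\,j(\ga,P^{\sg})=\al^{q+1}$ and the parity analysis of $\eps(\al^{j})$; everything else is routine orbit bookkeeping.
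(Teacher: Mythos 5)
Your argument is correct, but note first that the paper itself does not prove Theorem~\ref{eps}: it is imported verbatim from \cite[Thm.\,3.4]{n}, so there is no in-paper proof to compare against. Your proof supplies exactly the missing content, and each step checks out: the cocycle identity $j(\ga\beta,t)=j(\ga,\beta(t))j(\beta,t)$ does hold in all the degenerate branches of the definition of $j$ (it is what the case distinctions are designed for), it gives both the conjugation invariance of $\eps$ and the orbit telescoping $\prod_{s\in O}j(\ga,s)=j(\ga^{d},t)$, and the fact that non-fixed orbits have exact length $m$ is the same \cite[Lem.\,2.3]{lmnx} that the paper already invokes in Theorem~\ref{quotient}. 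The two delicate points are handled properly: in the homothetic case the parity argument ($m$ even forces $0\in S\Leftrightarrow\infty\in S$; $m$ odd forces $2m\mid q-1$) correctly converts the condition ``$0\in S$'', which is what $J$ actually sees, into the condition ``$\infty\in S$'' of the statement; and in the potentially homothetic case the evaluation $j(\ga,P)j(\ga,P^{\sg})=\al^{q+1}$ via Galois equivariance and $\omega\in\mu_{q+1}$, together with the computation $\eps(\al^{j})=(-1)^{bj/m}=(-1)^{j/m}$ for even $j$, is exactly what is needed. One caveat worth recording: as literally stated, the homothetic clause of the theorem is only correct for the standard representative $t\mapsto\la t$ (for a conjugate homothety whose fixed points are two finite rational points, the role of ``$\infty\in S$'' is played by one of those fixed points, as your own conjugation identity $J(\rho\ga\rho^{-1},\rho(S))=J(\ga,S)$ shows). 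Your proof, like the paper's applications in Sections 3 and 4, works with the standard representatives of Lemmas~\ref{rmk} and~\ref{rmkaff}, so this is a limitation of the statement's phrasing rather than a gap in your argument.
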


\section{Counting pointed hyperelliptic curves}
Let $\g$ be a finite group acting on a finite set $X$. The number of orbits of this action can be counted as the average number of fixed points:
\begin{equation}\label{cf} |\g\backslash X|=\frac
1{\vert\g\vert}\sum_{\ga\in\g}\vert \fg X\vert=
\sum_{\ga\in\cc}\frac{\vert \fg X\vert}{\vert \g_{\ga}\vert},
\end{equation} where  $\cc$ is a set of representatives of conjugacy classes of elements of
$\g$ and
 $$
 \fg X:=\{x\in X \tq \ga(x)=x\}, \quad
 \g_{\ga}:=\{\rho\in \g \tq \rho\ga\rho^{-1}=\ga\}.
 $$

In this section we apply this formula to compute the number $\op{hyp}^{\bullet}(g)$ of orbits of the set $$X=\yy:=\yy_{2g+2}=\left(k^*/(k^*)^2\right)\times \comb{\pr1}{2g+2}^{\infty}$$ under the action of the affine group $\g:=\aff$. By Theorem \ref{red} this is the number of $k$-isomorphism classes of pointed hyperelliptic curves of genus $g$: $\op{hyp}^{\bullet}(g)=|\hp|$. 

The following lemma exhibits a system of representatives of conjugacy classes of the affine group:

\begin{lem}\label{rmkaff}
There are $q$ conjugacy classes in $\aff$, represented by the following elements, which we distribute in three types:\e

A. The identity, $\ga(t)=t$, has order $m=1$ and $|\gg|=|\aff|=q(q-1)$.\e

B. The translation $\ga_0(t)=t+1$. It has $\op{Fix}_{\ga_0}=\{\infty\}$, order $m=p$ and $|\g_{\ga_0}|=q$. \e

C. The homotheties $\ga(t)= \la t$, $\la\in k^*$, $\la\ne1$. They have $\fg=\{\infty,\,0\}$, order $m=\op{ord}_{k^*}(\la)$, which is a divisor of
$q-1$, and $|\gg|=q-1$. 

In particular, for any divisor $m>1$ of $q-1$ there are $\varphi(m)$ conjugacy classes in $\aff$ of order $m$.
\end{lem}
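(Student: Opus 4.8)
The plan is to read off the conjugacy classes of $\aff$ directly from the matrix model given above, in which $\aff$ consists of the matrices $\begin{pmatrix}a&b\\0&1\end{pmatrix}$ with $a\in k^*$, $b\in k$, so that $|\aff|=q(q-1)$. A one-line computation gives the conjugation formula
$$
\begin{pmatrix}c&d\\0&1\end{pmatrix}\begin{pmatrix}a&b\\0&1\end{pmatrix}\begin{pmatrix}c&d\\0&1\end{pmatrix}^{-1}=\begin{pmatrix}a&cb+(1-a)d\\0&1\end{pmatrix},
$$
which shows at once that the top-left entry $a$ is a conjugacy invariant and records exactly how the entry $b$ moves under conjugation.

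First I would dispose of the case $a=1$. Here the formula reduces to $b\mapsto cb$ with $c\in k^*$, so $b=0$ yields the isolated class of the identity (centralizer all of $\aff$, order $q(q-1)$), while the $q-1$ matrices with $b\ne0$ form a single class, represented by $\ga_0(t)=t+1$; this element has order equal to the characteristic $p$, fixes only $\infty$ on $\pr1$, and by orbit–stabilizer has centralizer of order $q(q-1)/(q-1)=q$. For $a\ne1$, since $1-a\in k^*$ one can choose $d$ making $cb+(1-a)d=0$, so every such matrix is conjugate to the homothety $\begin{pmatrix}a&0\\0&1\end{pmatrix}$, i.e. $t\mapsto\la t$ with $\la=a$; conversely homotheties with distinct $a$ lie in distinct classes because $a$ is invariant. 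Each such class consists of the $q$ matrices with the given $a$ and arbitrary $b$, so its centralizer has order $q(q-1)/q=q-1$; the order of $t\mapsto\la t$ is $\op{ord}_{k^*}(\la)$, a divisor of $q-1$, and its fixed locus on $\pr1$ is $\{0,\infty\}$.

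Finally I would count classes by order. The identity accounts for $m=1$ and $\ga_0$ for $m=p$, and no homothety can have order $p$ since $p\nmid q-1$, so these do not coincide — this harmless remark is essentially the only point that needs checking. For each divisor $m>1$ of $q-1$, the homothety classes of order $m$ correspond bijectively to the $\varphi(m)$ elements of $k^*$ of multiplicative order $m$, which gives the stated count of classes of order $m$. Summing over all types, the total number of conjugacy classes is $1+1+\sum_{m\mid q-1,\ m>1}\varphi(m)=2+(q-1)-\varphi(1)=q$. The whole argument is elementary; there is no genuine obstacle beyond organizing the three cases and the routine centralizer bookkeeping.
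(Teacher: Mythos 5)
Your proof is correct. The paper states this lemma without proof, treating it as standard (unlike the analogous statement for $\pg$, which it cites from the literature); your direct computation of the conjugation formula in the matrix model, the resulting split into the classes $a=1,b=0$, $a=1,b\ne0$, and $a\ne1$, and the orbit--stabilizer bookkeeping for the centralizer orders is exactly the natural argument, and every step checks out.
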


For any $\ga\in\aff$, a pair $(\la,S)\in\yy$ is fixed by $\ga$ if and only if $\ga(S)=S$ and $\eps(\ga,S)=1$. Thus,
$$
\left|\fg\yy\right|=2\left|\left\{S\in \fg\comb{\pr1}{2g+2}^{\infty}\Tq \eps(\ga,S)=1\right\}\right|.
$$
By Theorem \ref{eps}, $\left|\fg\yy\right|=0$ if $\ga$ is an homothety of order $m$ with $(q-1)/m$ odd, and 
$$
\left|\fg\yy\right|=2\left|\fg\comb{\pr1}{2g+2}^{\infty}\right|=2\left|\fg\comb{\af1}{2g+1}(k)\right|,
$$ otherwise. We can apply now Theorem \ref{quotient} to compute the number of rational $(2g+1)$-sets of $\af1$ which are $\ga$-invariant. If $\ga$ is an homothety, in order to be able to apply Theorem \ref{quotient} we split these
rational $(2g+1)$-sets into two disjoint groups according to the fact that they contain $0$ or not; we get in this case
$$
\left|\fg\comb{\af1}{2g+1}(k)\right|=\left|\fg\comb{\G}{2g}(k)\right|+\left|\fg\comb{\G}{2g+1}(k)\right|,
$$  
and we obtain
$$
\left|\fg\comb{\af1}{2g+1}(k)\right|=\left\{
\begin{array}{ll}
a_{\af1}(2g+1),&\quad\mbox{ if }\ga=1,\\
a_{\af1}((2g+1)/p),&\quad\mbox{ if }\ga=\ga_0,\\
a_{\G}(2g/m)+a_{\G}((2g+1)/m),&\quad\mbox{ otherwise},
\end{array}
\right.
$$where $m$ is the order of $\ga$. 

The computation of $\op{hyp}^{\bullet}(g)$ given by (\ref{cf}) can be splitted into the sum of three terms $h_A+h_B+h_C$, each term taking care of the contribution of all conjugacy classes in a concrete type, as described in Lemma \ref{rmkaff}. Since  the value of $\left|\fg\yy\right|$ depends only on $m$, for the computation of $h_C$ we can group together all $\ga$ with the same order and we obtain

\begin{multline*}
\op{hyp}^{\bullet}(g)=
\dfrac{2a_{\af1}(2g+1)}{q(q-1)}+
\frac 2qa_{\af1}\left(\frac{2g+1}p\right)+\\+
\dfrac2{q-1}\sum_{1<m|(q-1)/2}\varphi(m)\left(a_{\G}\left(\frac{2g}m\right)+a_{\G}\left(\frac{2g+1}m\right)\right)=\\=
2q^{2g-1}+2A_1\left(\frac{2g+1}p\right)+\\+
\sum_{1<m|(q-1)/2}2\varphi(m)\left(A_2\left(\frac{2g}m\right)+A_2\left(\frac{2g+1}m\right)\right).\end{multline*}

By using the explicit formulas for $A_1(n)$, $A_2(n)$ given in Lemma \ref{atilla} we obtain a closed formula for $\op{hyp}^{\bullet}(g)$ as a polynomial in $q$ with integer coefficients that depend on $g$ and the set of divisors of $q-1$. This is more clearly seen if we rewrite our formula for $\op{hyp}^{\bullet}(g)$ in a way that is more suitable for an effective computation when $g$ is given and we want to deal with a generic value of $q$.

\begin{thm}\label{hpointed}
The number of $k$-isomorphism classes of pointed hyperelliptic curves of genus $g$ is:
\begin{multline*}
\op{hyp}^{\bullet}(g)=2q^{2g-1}+2A_1\left(\dfrac {2g+1}p\right)+ \\
+\sum_{1<m|2g+1}2\varphi(m)\left[A_2\left(\frac{2g+1}m\right)\right]_{m|q-1}+
\sum_{1<m|2g}2\varphi(m)\left[A_2\left(\frac{2g}m\right)\right]_{2m|q-1}.
\end{multline*}
By convention, $A_1(x)=0$ if $x$ is not a positive integer and the terms $[x]_{\mbox{\tiny condition}}$ are considered only if the ``condition" is satisfied. 
\end{thm}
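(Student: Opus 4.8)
The plan is to start from the closed expression for $\op{hyp}^{\bullet}(g)$ obtained just above the statement of Theorem \ref{hpointed}, namely
$$
\op{hyp}^{\bullet}(g)=2q^{2g-1}+2A_1\!\left(\dfrac{2g+1}p\right)+\sum_{1<m\mid (q-1)/2}2\varphi(m)\left(A_2\!\left(\dfrac{2g}m\right)+A_2\!\left(\dfrac{2g+1}m\right)\right),
$$
and to re-index the last sum so that it runs over the divisors of $2g+1$ and of $2g$ (quantities independent of $q$) rather than over the divisors of $(q-1)/2$. First I would break the sum into its two pieces, the one built from $A_2((2g+1)/m)$ and the one built from $A_2((2g)/m)$, using that $A_2$ inherits from Theorem \ref{quotient} the convention $A_2(x)=0$ whenever $x$ is not a positive integer.

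In the $A_2((2g+1)/m)$ piece a term is nonzero only when $m\mid 2g+1$; so that piece is the sum of $2\varphi(m)A_2((2g+1)/m)$ over those $m>1$ with $m\mid 2g+1$ and $m\mid (q-1)/2$. Since $m\mid 2g+1$ forces $m$ odd, and since $q-1=2\cdot(q-1)/2$ with $\gcd(m,2)=1$, for such $m$ the conditions $m\mid (q-1)/2$ and $m\mid q-1$ are equivalent; hence this piece equals $\sum_{1<m\mid 2g+1}2\varphi(m)\left[A_2((2g+1)/m)\right]_{m\mid q-1}$. In the $A_2((2g)/m)$ piece a term is nonzero only when $m\mid 2g$, and for an arbitrary $m$ one has $m\mid (q-1)/2\iff 2m\mid q-1$; hence this piece equals $\sum_{1<m\mid 2g}2\varphi(m)\left[A_2((2g)/m)\right]_{2m\mid q-1}$. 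Adding the two pieces back to the unchanged terms $2q^{2g-1}$ and $2A_1((2g+1)/p)$ gives the asserted formula; in particular $2A_1((2g+1)/p)$ contributes only when $p\mid 2g+1$, which is exactly what the convention on $A_1$ records.

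There is essentially no obstacle here: the argument only reorganises a finite sum, exchanging the role of the ``running'' divisor and the ``condition'' divisor. The one point deserving a word of care is the observation that for odd $m$ the divisibility $m\mid (q-1)/2$ is equivalent to $m\mid q-1$; this is what lets the cleaner flag $m\mid q-1$ appear in the sum over divisors of $2g+1$, while the flag $2m\mid q-1$ must be kept in the sum over divisors of $2g$.
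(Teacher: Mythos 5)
Your proposal is correct and follows exactly the route the paper takes: the theorem is obtained from the intermediate formula $\op{hyp}^{\bullet}(g)=2q^{2g-1}+2A_1\left(\frac{2g+1}p\right)+\sum_{1<m|(q-1)/2}2\varphi(m)\left(A_2\left(\frac{2g}m\right)+A_2\left(\frac{2g+1}m\right)\right)$ by re-indexing over the divisors of $2g+1$ and $2g$, and your justification of the two divisibility flags (using that $m\mid 2g+1$ forces $m$ odd so $m\mid(q-1)/2\iff m\mid q-1$, while in general $m\mid(q-1)/2\iff 2m\mid q-1$) supplies precisely the details the paper leaves implicit.
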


We display in Table 1 the value of $\op{hyp}^{\bullet}(g)$ for $2\le g\le 7$. 

\begin{center}
\begin{table}\as{1.4}
\caption{\small Number of pointed hyperelliptic curves of genus $g$ up to $k$-isomorphism}
\begin{tabular}{|c|l|}\hline
\mbox{\small$g$}&\qquad\qquad\qquad\qquad\qquad\ \mbox{\small$\op{hyp}^{\bullet}(g)=|\hp|$}\\\hline
\mbox{\small$2$}&\mbox{\small$2q^3+2[q-1]_{4|q-1}+[4]_{8|q-1}+[8]_{5|q-1}+[2]_{p=5}$}\\\hline
\mbox{\small$3$}&\mbox{\small$2q^5+2[q^2-q+1]_{4|q-1}+4[q-1]_{3|q-1}+[12]_{7|q-1}+[4]_{12|q-1}+[2]_{p=7}$}\\\hline
\mbox{\small$4$}&\negmedspace\negmedspace\mbox{\small$\begin{array}{l}
2q^7+2[q^3-q^2+q-1]_{4|q-1}+4[q^2-q+1]_{3|q-1}+4[q-1]_{8|q-1}+\\
\qquad\quad+[12]_{9|q-1}+[8]_{16|q-1}+2[q^2-q]_{p=3}\end{array}$}\\\hline
\mbox{\small$5$}&\negmedspace\negmedspace\mbox{\small$\begin{array}{l}
2q^9+2[q^4-q^3+q^2-q+1]_{4|q-1}+8[q-1]_{5|q-1}+[20]_{11|q-1}+[8]_{20|q-1}+\\
\qquad\quad+[2]_{p=11}\end{array}$}\\
\hline\mbox{\small$6$}&\negmedspace\negmedspace\mbox{\small$\begin{array}{l}
2q^{11}+2[q^5-q^4+q^3-q^2+q-1]_{4|q-1}+4[q^3-q^2+q-1]_{3|q-1}+\\
\qquad\quad+4[q^2-q+1]_{8|q-1}+4[q-1]_{12|q-1}+[24]_{13|q-1}+[8]_{24|q-1}+[2]_{p=13}\end{array}$}\\\hline
\mbox{\small$7$}&\negmedspace\negmedspace\mbox{\small$\begin{array}{l}
2q^{13}+2[q^6-q^5+q^4-q^3+q^2-q+1]_{4|q-1}+4[q^4-q^3+q^2-q+1]_{3|q-1}+\\
\qquad\quad+8[q^2-q+1]_{5|q-1}+12[q-1]_{7|q-1}+[16]_{15|q-1}+[12]_{28|q-1}+\\
\qquad\quad\qquad\qquad+2[q^4-q^3]_{p=3}+2[q^2-q]_{p=5}\end{array}$}\\
\hline
\end{tabular}
\end{table}
\end{center}

\begin{cor}\label{ohp}
The dominant terms of $\op{hyp}^{\bullet}(g)$ are 
$$\op{hyp}^{\bullet}(g)=2q^{2g-1}+O(q^{g-1}).$$
\end{cor}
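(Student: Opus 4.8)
The plan is to read off the asymptotics directly from the closed formula for $\op{hyp}^{\bullet}(g)$ established just above, whether in the pre-Theorem display or in the statement of Theorem \ref{hpointed}. The leading term is visibly $2q^{2g-1}$, so the whole content of the corollary is the estimate that \emph{every other} contribution is $O(q^{g-1})$. First I would dispose of the term $2A_1\left((2g+1)/p\right)$: by the formula for $A_1(n)$ in Lemma \ref{atilla}, this is at most of the order $q^{(2g+1)/p-1}$, and since $p\ge3$ this exponent is at most $(2g+1)/3-1 < g-1$ for $g>1$; moreover it is nonzero only when $p\mid 2g+1$, i.e. for at most finitely many primes $p$ once $g$ is fixed, and in any case the bound $O(q^{g-1})$ absorbs it comfortably.

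Next I would bound the homothetic contribution, i.e. the two sums $\sum_{1<m\mid 2g+1}2\varphi(m)\left[A_2\left((2g+1)/m\right)\right]_{m\mid q-1}$ and $\sum_{1<m\mid 2g}2\varphi(m)\left[A_2\left(2g/m\right)\right]_{2m\mid q-1}$. Each sum has a number of terms depending only on $g$ (the number of divisors of $2g+1$ or $2g$), and each $\varphi(m)$ is a constant depending only on $g$; so it suffices to bound a single term $A_2(N)$ uniformly. By Lemma \ref{atilla}, $A_2(N)=\dfrac{q^N-(-1)^N}{q+1}$, which is $O(q^{N-1})$. The largest value of $N$ occurring is obtained by taking the smallest admissible $m>1$, namely $m=2$: this gives $N=(2g+1)/2$ (not an integer, so that term is absent) from the first sum, and $N=2g/2=g$ from the second sum, contributing $A_2(g)=O(q^{g-1})$. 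Every other term has a strictly smaller exponent. Hence the total homothetic contribution is $O(q^{g-1})$, as claimed. Putting the three pieces together gives $\op{hyp}^{\bullet}(g)=2q^{2g-1}+O(q^{g-1})$.

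There is essentially no obstacle here; the only point requiring a moment's care is the bookkeeping of which exponent is genuinely the second-largest. The candidate second-order term comes from $m=2$ in the divisor sum over $2g$, yielding exponent $g-1$ after dividing by $q+1$; one should note that $m=2$ does not divide $2g+1$ (as $2g+1$ is odd), so the first sum only contributes terms with $m\ge3$, whose exponents are at most $(2g+1)/3-1<g-1$. Thus the error term $O(q^{g-1})$ is in fact sharp: when $4\mid q-1$ the term $g=2$ already exhibits a genuine $q^{g-1}=q$ contribution from $2[q-1]_{4\mid q-1}$, consistent with the entries of Table 1. One may optionally remark that the implied constant in $O(q^{g-1})$ depends only on $g$ (through the divisors of $2g$ and $2g+1$ and the values of $\varphi$), and that for many residue classes of $q$ the bracketed conditions fail, so the error is often of strictly smaller order; but for the statement as given, the uniform bound $O(q^{g-1})$ is all that is needed.
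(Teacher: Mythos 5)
Your proof is correct and follows essentially the same route as the paper's: the paper's own argument is just the one-line observation that, apart from $2q^{2g-1}$, the highest-degree contribution is $\left[A_2(g)\right]_{4|q-1}$ coming from the divisor $m=2$ of $2g$, which is exactly the term you identify. Your additional bookkeeping (bounding $A_1\left((2g+1)/p\right)$ via $p\ge 3$ and noting that $m=2$ cannot occur in the sum over divisors of the odd number $2g+1$) simply makes explicit what the paper leaves implicit.
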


\begin{proof}
Apart from the generic term $2q^{2g-1}$, the highest power of $q$ arising from the other terms is the degree of $\left[A_2(g)\right]_{4|q-1}$, corresponding to the divisor $m=2$ of $2g$.
\end{proof}

\section{Counting hyperelliptic curves with a rational Weierstrass point}
In this section we apply the formula (\ref{cf}) to compute the number $\op{hyp}\rt(g)$ of orbits of the set $$X=\zz:=\zz_{2g+2}=\left(k^*/(k^*)^2\right)\times \comb{\pr1}{2g+2}\rt$$ under the action of the projective group $\g:=\pg$. By Theorem \ref{red} this is the number of $k$-isomorphism classes of hyperelliptic curves of genus $g$ having at least one rational Weierstrass point: $\op{hyp}\rt(g)=|\h|$. 

We divide the conjugacy classes of $\pg$ into four types A, B, C, D, as indicated in Lemma \ref{rmk}. The computation of $\op{hyp}\rt(g)$ given by (\ref{cf}) can be splitted into the sum of four terms $\op{hyp}\rt(g)=h_A+h_B+h_C+h_D$, taking care of the contribution of all conjugacy classes of each concrete type. 

For any $\ga\in\pg$, a pair $(\la,S)\in\zz$ is fixed by $\ga$ if and only if $\ga(S)=S$ and $\eps(\ga,S)=1$. Thus,
$$
\left|\zg\right|=2\left|\left\{S\in \fg\comb{\pr1}{2g+2}\rt\Tq \eps(\ga,S)=1\right\}\right|.
$$

For $\ga=1$ and $\ga=\ga_0$ Theorem \ref{eps} shows that $\eps(\ga,S)=1$ 
for all $\ga$-invariant rational $(2g+2)$-sets of $\pr1$. Thus, 
$$\left|\zg\right|=2\left|\fg\comb{\pr1}{2g+2}\rt\right|.$$ For $\ga=1$ we get directly
$$
h_A=\dfrac{|\zz|}{q(q-1)(q+1)}=\dfrac{2b_{\pr1}(2g+2)}{q(q-1)(q+1)}=2B(2g+2).
$$
We split the $\ga_0$-invariant rational $(2g+2)$-sets of $\pr1$ that contain at least one rational point into two families: those containing $\infty$ and those not containing $\infty$. We have,
$$
\left|\op{Fix}_{\ga_0}\comb{\pr1}{2g+2}\rt\right|=
\left|\op{Fix}_{\ga_0}\comb{\af1}{2g+1}(k)\right|+
\left|\op{Fix}_{\ga_0}\comb{\af1}{2g+2}\rt\right|.
$$
Theorem \ref{quotient} can be applied to compute the cardinality of both families, and we get
\begin{multline*}
h_B=\dfrac{|\op{Fix}_{\ga_0}\zz|}q=\dfrac 2q \left(a_{\af1}\left(\frac{2g+1}p\right)+b_{\af1}\left(\dfrac qp,\frac {2g+2}p\right)\right)=\\=2A_1\left(\frac{2g+1}p\right)
+2B_1\left(\frac {2g+2}p\right).
\end{multline*}

For $\ga$ of type C or D we shall see below that the value of $|\zg|$ depends only on the order $m$ of $\ga$; hence, in the computation of $h_C$ and $h_D$ we can group together all $\ga$ with the same order and Lemma \ref{rmk} shows that we can express the partial sums $h_C$ and $h_D$ as:
$$
h_C=\sum_{1<m|(q-1)}\dfrac{\varphi(m)|\zg|}{2(q-1)},
\quad h_D=\sum_{1<m|(q+1)}\dfrac{\varphi(m)|\zg|}{2(q+1)},
$$
where for each $m$ we choose an arbitrary $\ga$ of order $m$ of type C or D.

For $\ga$ an homothety of order $m$, we split the $\ga$-invariant rational $(2g+2)$-sets of $\pr1$ that contain at least one rational point into three families: those containing both fixed points of $\ga$ ($0$ and $\infty$), those containing exactly one fixed point of $\ga$, and those containing no fixed points of $\ga$. Since any $\ga$-invariant $n$-set of $\G$ has necessarily $n$ multiple of $m$, Theorem \ref{eps} shows that:
$$
|\zg|=2\left|\fg\comb {\G}{2g+2}\rt\right|,\quad\mbox{ if }\,\frac{q-1}m\,\mbox{ odd},
$$
\begin{multline*}
|\zg|=2\left|\fg\comb {\G}{2g}(k)\right|+4\left|\fg\comb {\G}{2g+1}(k)\right|+\\+2\left|\fg\comb {\G}{2g+2}\rt\right|,\quad\mbox{ if }\,\frac{q-1}m\,\mbox{ even}.
\end{multline*}

Theorem \ref{quotient} can be applied to compute the cardinality of each family, and
we get
\begin{multline*}
h_C=\sum_{1<m|q-1}\dfrac{\varphi(m)}{q-1}\left(\left[a_{\G}\left(\frac{2g}m\right)\right]_{2m|q-1}+2a_{\G}\left(\frac{2g+1}m\right)+\right.\\
\left.+b_{\G}\left(\dfrac{q-1}m,\frac{2g+2}m\right)\right)=\sum_{1<m|q-1}\varphi(m)\left(\left[A_2\left(\frac{2g}m\right)\right]_{2m|q-1}+\right.\\\left.+2A_2\left(\frac{2g+1}m\right)+B_2\left(\dfrac{q-1}m,\frac{2g+2}m\right)
\right).
\end{multline*}
Finally, $h_D$ can be computed by using completely analogous arguments:
\begin{multline*}
h_D=\sum_{1<m|q+1}\dfrac{\varphi(m)}{q+1}\left(\left[b_{\pr1_0}\left(\dfrac{q+1}m,\dfrac {2g}m\right)\right]_{\frac {2g}m \equiv \frac {q+1}m\mdt2}+\right.\\\left.
+\left[b_{\pr1_0}\left(\dfrac{q+1}m,\dfrac {2g+2}m\right)\right]_{m|g+1}\right)=\\=\sum_{1<m|q+1}\varphi(m)\left(\left[B_0\left(m,\dfrac {2g}m\right)\right]_{\frac {2g}m \equiv \frac {q+1}m\mdt2}+\right.\\\left.+\left[B_0\left(m,\dfrac {2g+2}m\right)\right]_{m|g+1}
\right).
\end{multline*}

By using the explicit formulas for $A_i(n)$ and $B_i(m,n)$ given in Lemma \ref{atilla} we obtain a closed formula for $\op{hyp}\rt(g)$ as a polynomial in $q$ with rational coefficients that depend on the set of divisors of $q-1$ and $q+1$. As in the previous section we rewrite our computation of $\op{hyp}\rt(g)$ in a way that is more suitable for an effective computation when $g$ is given and we want to deal with a generic value of $q$.

\begin{thm}\label{hrat}
The number of $k$-isomorphism classes of hyperelliptic curves of genus $g$ having at least one rational Weierstrass point is:
\begin{multline*}
\op{hyp}\rt(g)=2B(2g+2)+2A_1\left(\frac{2g+1}p\right)+2B_1\left(\frac{2g+2}p\right)+\\+
\sum_{1<m|2g}\varphi(m)\left(\left[B_0\left(m,\dfrac {2g}m\right)\right]_{m|q+1,\,\frac {2g}m \equiv \frac {q+1}m\mdt2}+\left[A_2\left(\frac{2g}m\right)\right]_{2m|q-1}\right)+\\+
\sum_{1<m|2g+1}2\varphi(m)\left[A_2\left(\frac{2g+1}m\right)\right]_{m|q-1}+\\+
\sum_{1<m|2g+2}\varphi(m)\left(\left[B_0\left(m,\dfrac {2g+2}m\right)\right]_{m|q+1,\,m|g+1}+\right.\\\left.+\left[B_2\left(m,\frac{2g+2}m\right)\right]_{m|q-1}\right).\end{multline*}
By convention, $A_1(x)=0=B_1(x)$ if $x$ is not a positive integer and the terms $[x]_{\mbox{\tiny condition}}$ are considered only if the ``condition" is satisfied. 
\end{thm}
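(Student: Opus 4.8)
The plan is to assemble the four partial sums $h_A+h_B+h_C+h_D$ produced by the preceding computations in this section, substitute the normalized counting functions of Lemma~\ref{atilla}, and reorganize the divisor sums so that the outcome is manifestly a polynomial in $q$ whose coefficients depend only on $g$ and on which members of a fixed finite list of integers divide $q-1$ or $q+1$. Recall that everything rests on the orbit-counting formula (\ref{cf}) for the action of $\pg$ on $\zz$, split according to the four types of conjugacy classes in Lemma~\ref{rmk}; for a representative $\ga$ of each type one needs $|\zg|$, and since $(\la,S)\in\zz$ is $\ga$-fixed exactly when $\ga(S)=S$ and $\eps(\ga,S)=1$, this amounts to counting, with a factor $2$, the $\ga$-invariant rational $(2g+2)$-sets $S$ with $S\cap\pr1(k)\ne\emptyset$ and $\eps(\ga,S)=1$. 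The two tools are Theorem~\ref{eps}, which pins down the sign $\eps(\ga,S)$ in terms of the order $m$ of $\ga$ and the incidence of $S$ with $\fg$, and Theorem~\ref{quotient}, which counts $\ga$-invariant rational $n$-sets avoiding $\fg$ (so that they live on $V=\pr1\setminus\fg$) in terms of $a_V$ and $b_V$.

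Concretely, $\ga=1$ gives $h_A=2B(2g+2)$; the translation $\ga_0$, for which $\eps\equiv1$ and $\op{Fix}_{\ga_0}=\{\infty\}$, gives, after splitting according to whether $\infty\in S$ and applying Theorem~\ref{quotient} on $V=\af1$ with $m=p$, the value $h_B=2A_1((2g+1)/p)+2B_1((2g+2)/p)$; a homothety $\ga$ of order $m\mid q-1$, with fixed points $0,\infty\in\pr1(k)$, is handled by splitting the $\ga$-invariant sets according to how many of $0,\infty$ they contain and applying Theorem~\ref{quotient} on $V=\G$, the sign distinction in Theorem~\ref{eps} ($\eps=1$ unless $\infty\in S$ and $(q-1)/m$ odd) forcing the term $A_2(2g/m)$ to occur only when $2m\mid q-1$; and a potentially homothetic $\ga$ of order $m\mid q+1$ is handled on $V=\pr1_0$, where the two fixed points form a quadratic-conjugate pair, so that a Galois-stable $S$ contains both or neither, and the parity of $(q+1)/m$ against $2g/m$, resp. $(2g+2)/m$, produces the bracket conditions $2g/m\equiv(q+1)/m\md2$ and $m\mid g+1$, the counts being expressed through $B_0$.

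The reindexing is then the following: in $h_C$ and $h_D$ the natural summation runs over all divisors $m>1$ of $q-1$, resp. $q+1$, but by the conventions of Theorem~\ref{quotient} the quantities $A_2(2g/m)$, $A_2((2g+1)/m)$, $B_2(m,(2g+2)/m)$, $B_0(m,2g/m)$ and $B_0(m,(2g+2)/m)$ all vanish unless $m$ divides $2g$, $2g+1$ or $2g+2$ respectively; so one interchanges the order of summation, letting $m$ run over the divisors of each of these three fixed integers and moving the conditions $m\mid q-1$, $m\mid q+1$ together with the parity conditions into the bracket symbols $[\ \cdot\ ]_{\text{condition}}$. Collecting $h_A+h_B+h_C+h_D$ in this form gives the displayed formula, and inserting the explicit rational functions of $q$ for $A_1,A_2,A_0,B,B_1,B_2,B_0$ from Lemma~\ref{atilla} exhibits $\op{hyp}\rt(g)$ as a polynomial in $q$. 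I expect the only genuinely delicate step to be the case analysis behind $h_C$ and $h_D$: matching the parity case distinctions of Theorem~\ref{eps} on $(q\pm1)/m$ with the partition of $\ga$-invariant sets into sub-families (according to how many fixed points of $\ga$ lie in the set) needed to apply Theorem~\ref{quotient}, and verifying that this partition is exhaustive and disjoint, so that nothing is double-counted or omitted; one must also note that although the variety $\pr1_0$ depends on the chosen potentially homothetic $\ga$, all such varieties are $k$-isomorphic, so $|\zg|$ depends only on $m$.
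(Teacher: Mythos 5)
Your proposal is correct and follows essentially the same route as the paper: the orbit-counting formula (\ref{cf}) split over the four conjugacy types of Lemma \ref{rmk}, the sign analysis of Theorem \ref{eps} combined with Theorem \ref{quotient} applied to $\pr1$, $\af1$, $\G$ and $\pr1_0$ after partitioning the invariant sets by their incidence with $\fg$, and the final reindexing of the divisor sums over $2g$, $2g+1$, $2g+2$. The delicate points you flag (exhaustiveness of the partition, the parity conditions, and the fact that $|\zg|$ depends only on $m$) are exactly the ones the paper's argument addresses.
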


We display in Table 2 the value of $\op{hyp}\rt(g)$ for $2\le g\le 5$.  
\begin{center}
\begin{table}\as{1.4}
\caption{\small Number of hyperelliptic curves of genus $g$ admitting a Koblitz model, up to $k$-isomorphism}
\begin{tabular}{|c|l|}\hline
\mbox{\small$g$}&\qquad\qquad\qquad\qquad\qquad\ \mbox{\small$\op{hyp}\rt(g)=|\h|$}\\\hline
\mbox{\small$2$}&\negmedspace\negmedspace\mbox{\small$\begin{array}{l}\frac{91}{72}q^3+\frac{37}{48}q^2-\frac12q+\frac{11}{16}+[q-1]_{4|q-1}+\frac18[3q+1]_{4|q+1}+[2]_{p=5}+[8]_{5|q-1}+
\\\qquad\qquad+[2]_{8|q-1}-\left[\frac29\right]_{3|q-1}+\left[\frac59\right]_{3|q+1}+\left[\frac12\right]_{8|q-3}\end{array}$}\\\hline
\mbox{\small$3$}&\negmedspace\negmedspace\mbox{\small$\begin{array}{l}\frac{3641}{2880}q^5+
\frac{53}{144}q^4+\frac{83}{144}q^3-\frac89q^2+\frac{893}{960}q-\frac38+[\frac{67}{48}q^2-\frac43q-\frac{7}{16}]_{4|q-1}+\\\qquad\qquad+2[q-1]_{3|q-1}+
\frac19[5q+2]_{3|q+1}+[12]_{7|q-1}+[2]_{p=7}+[2]_{12|q-1}+\\
\qquad\qquad\qquad\qquad+\left[\frac12\right]_{8|q-1}+\left[\frac13\right]_{12|q-5}\end{array}$}\\\hline
\mbox{\small$4$}&\negmedspace\negmedspace\mbox{\small$\begin{array}{l}
\frac{28319}{22400}q^7+\frac{2119}{5760}q^6-\frac{2059}{9600}q^5+\frac{6143}{11520}q^4+
\frac{83}{1200}q^3+\frac{187}{5760}q^2-\frac{9}{1400}q-\frac{59}{1280}+\\\
+\left[-\frac{233}{384}q^3+\frac{99}{128}q^2-\frac{607}{384}q+\frac{117}{128}\right]_{4|q+1}
+4[q^2-q+1]_{3|q-1}+2[q^2-q]_{p=3}+\\\qquad+2[q-1]_{8|q-1}+\frac1{16}[7q+3]_{8|q+1}+
\frac{2}{25}[9q+4]_{5|q+1}+\frac{18}{25}[q-1]_{5|q-1}+\\\qquad\qquad+\left[\frac9{25}q-\frac15\right]_{p=5}+[12]_{9|q-1}+[4]_{16|q-1}+\left[\frac12\right]_{16|q-7}\end{array}$}\\\hline
\mbox{\small$5$}&\negmedspace\negmedspace\mbox{\small$\begin{array}{l}
\frac{27526069}{21772800}q^9+\frac{16481}{44800}q^8-\frac{778721}{3628800}q^7+\frac{11923}{86400}q^6+\frac{44881}{64800}q^5-\frac{43909}{43200}q^4+\frac{3133141}{3628800}q^3-\\\, -\frac{252227}{201600}q^2+\frac{357221}{161280}q-\frac{171}{256}+\left[\frac{5351}{3840}q^4-\frac{199}{160}q^3+\frac{521}{640}q^2-\frac{391}{240}q+\frac{597}{256}\right]_{4|q-1}+\\\  +\left[\frac{155}{324}q^2-\frac{167}{162}q+\frac{137}{972}\right]_{3|q+1}-\left[\frac{155}{324}q^2-\frac{241}{162}q+\frac{1361}{972}\right]_{3|q-1}+4[q-1]_{5|q-1}+\\
\ \  +\left[\frac{18}{25}q+\frac{8}{25}\right]_{5|q+1}+[20]_{11|q-1}+[2]_{p=11}+[4]_{20|q-1}+\left[\frac13\right]_{12|q-1}+\left[\frac25\right]_{20|q-9}\end{array}$}\\\hline
\end{tabular}
\end{table}
\end{center}

\begin{cor}\label{ohrat}
The dominant term of $\op{hyp}\rt(g)$ is
$$
\op{hyp}\rt(g)=
\left(1-\dfrac1{2!}+\dfrac1{3!}-\cdots-\dfrac1{(2g+2)!}\right)2q^{2g-1}+O(q^{2g-2}).
$$In particular, for $g$ large $\op{hyp}\rt(g)$ is asymptotically $(1-e^{-1})2q^{2g-1}$.
\end{cor}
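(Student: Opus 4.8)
The plan is to extract the dominant term directly from the closed formula of Theorem~\ref{hrat} by comparing the degree in $q$ of each summand. The one elementary input needed is that every multiset coefficient $\combr{r}{i}=\binom{r+i-1}{i}$ is a polynomial in $r$ of degree $i$ with leading coefficient $1/i!$; combined with Lemma~\ref{atilla} and Corollary~\ref{bmn} this lets one read off the degree and leading term, as a polynomial in $q$, of each of $A_1,A_2,A_0,B,B_0,B_1,B_2$. The heart of the matter is the summand $2B(2g+2)$. Writing $b_{\pr1}(n)=\sum_{i=1}^{n}(-1)^{i+1}\combr{q+1}{i}a_{\pr1}(n-i)$ as in Corollary~\ref{bmn} and using $a_{\pr1}(n-i)=q^{n-i}-q^{n-i-2}$ (together with the degenerate values for $n-i\le 2$), one sees that $b_{\pr1}(n)$ is a polynomial in $q$ of degree $n$ with leading coefficient $\sum_{i=1}^{n}(-1)^{i+1}/i!=1-\tfrac1{2!}+\tfrac1{3!}-\cdots+\tfrac{(-1)^{n+1}}{n!}$; this sum lies strictly between $1/2$ and $1$, so the degree is indeed $n$. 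Dividing by $q(q-1)(q+1)=q^3-q$ drops the degree by $3$ and preserves the leading coefficient, so with $n=2g+2$ we get $2B(2g+2)=\bigl(1-\tfrac1{2!}+\tfrac1{3!}-\cdots-\tfrac1{(2g+2)!}\bigr)2q^{2g-1}+O(q^{2g-2})$, which is precisely the asserted leading term. (The same computation can equally be made starting from the explicit formula for $B(n)$ in Lemma~\ref{atilla}.)

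It remains to check that every other summand of Theorem~\ref{hrat} is $O(q^{2g-2})$, so that $\op{hyp}\rt(g)=2B(2g+2)+O(q^{2g-2})$. The terms $2A_1((2g+1)/p)$ and $2B_1((2g+2)/p)$ vanish unless $p\mid 2g+1$, resp. $p\mid 2g+2$, in which case, since $p\ge 3$, their arguments are at most $(2g+2)/3$; as $A_1(\cdot)$ and $B_1(\cdot)$ are polynomials in $q$ of degree one less than their argument (for $B_1$ because the coefficients $\combr{q/p}{i}$ have degree $i$), these terms are $O(q^{(2g+2)/3-1})$. For the divisor sums one notes that $A_2(n)=\tfrac{q^n-(-1)^n}{q+1}$ has degree $n-1$, and that $B_0(m,n)$ and $B_2(m,n)$ --- being built from $\combr{(q\pm1)/m}{i}$ (degree $i$) and $A_0(n-i)$, $A_2(n-i)$ (degree $n-i-1$) --- also have degree $n-1$. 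Since in every divisor-sum term one has $m\ge 2$, the relevant argument $n$ is at most $(2g+2)/2=g+1$, so each such term has degree at most $g$. Because $g>1$ forces $g\le 2g-2$ and $(2g+2)/3-1\le 2g-2$, every summand except $2B(2g+2)$ is indeed $O(q^{2g-2})$, which yields the first displayed formula of the Corollary.

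For the asymptotic statement one only has to observe that the leading coefficient $2\sum_{i=1}^{2g+2}(-1)^{i+1}/i!$ is the $(2g+2)$-th partial sum of the convergent series $2\sum_{i=1}^{\infty}(-1)^{i+1}/i!=2(1-e^{-1})$, and hence tends to $2(1-e^{-1})$ as $g\to\infty$. The one place that really requires care is the degree and leading-coefficient bookkeeping --- for $b_{\pr1}(2g+2)$ above all, but also for the auxiliary functions $B_0,B_1,B_2$; once the degree-$i$ property of the multiset coefficients $\combr{r}{i}$ is in hand, the rest is routine.
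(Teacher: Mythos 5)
Your proof is correct and takes essentially the same approach as the paper, whose proof consists of the single remark that the dominant term is the principal monomial of $2B(2g+2)$; you simply supply the degree and leading-coefficient bookkeeping that the authors leave implicit. Both your identification of the leading coefficient of $b_{\pr1}(2g+2)$ as the alternating partial sum $\sum_{i=1}^{2g+2}(-1)^{i+1}/i!$ and your verification that every other summand of Theorem \ref{hrat} has degree at most $\max(g,\lfloor(2g+2)/3\rfloor-1)\le 2g-2$ check out.
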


\begin{proof}
The dominant term is the principal monomial of $2B(2g+2)$.
\end{proof}

\end{document}